\newcommand{\supp}{\operatorname{supp}}
\newcommand{\diag}{\mathrm{diag}}
\def \R{\mathbb R}
\def \p{\partial}
\def \<{\langle}
\def \>{\rangle}
\def \1N{\sum_{k=1}^{N}}
\def \A{\mathcal A}
\def \F{\mathcal F}
\def \dgamma{\dot\gamma}
\def \lg{\mathfrak g}
\def \0X{X^{(0)}}
\def \X{\mathbb X}
\newtheorem{lemma}{Lemma}[section]
\newtheorem{prop}[lemma]{Proposition}
\newtheorem{thm}[lemma]{Theorem}
\newtheorem{cor}[lemma]{Corollary}
\newtheorem*{thm*}{Theorem}
\newtheorem*{prop*}{Proposition}
\newtheorem*{cor*}{Corollary}
\newtheorem*{conj*}{Conjecture}
\numberwithin{equation}{section}
\theoremstyle{remark}
\newtheorem{rem}[lemma]{Remark}
\newtheorem*{rem*}{Remark}
\theoremstyle{definition}
\newtheorem{Def}[lemma]{Definition}
\newtheorem*{Def*}{Definition}
\begin{document}

\title{Lens rigidity for a particle in a Yang-Mills field}
\author[G. P. Paternain, G. Uhlmann, H. Zhou]{Gabriel P. Paternain, Gunther Uhlmann, Hanming Zhou}
\date{\today}
\address{Department of Pure Mathematics and Mathematical Statistics, University of Cambridge, Cambridge CB3 0WB, UK}
\email{g.p.paternain@dpmms.cam.ac.uk}
\address{Department of Mathematics, University of Washington, Seattle, WA 98195-4350, USA; HKUST Jockey Club Institute for Advanced Study, HKUST, Clear Water Bay, Kowloon, Hong Kong, China}
\email{gunther@math.washington.edu}
\address{Department of Mathematics, University of California Santa Barbara, Santa Barbara, CA 93106, USA}
\email{hzhou@math.ucsb.edu}

\begin{abstract} We consider the motion of a classical colored spinless particle under the influence of an external Yang-Mills potential $A$ on a compact manifold with boundary of dimension $\geq 3$.  We show that under suitable convexity assumptions, we can recover the potential $A$, up to gauge transformations, from the lens data of the system, namely, scattering data plus travel times between boundary points.

\end{abstract}

\maketitle

\section{Introduction} This paper considers a nonlinear geometric inverse problem associated with the motion of a classical colored spinless particle under the influence of an external Yang-Mills potential $A$. We shall show that under suitable conditions, it is possible to recover the potential $A$, up to gauge transformations, from the lens data of the system (i.e. scattering data plus travel times).

In order to set up the inverse problem, let us give first a brief description of the system in question and the physical background. Let $(M,g)$ be a compact connected Riemannian manifold with boundary and $G$ a compact Lie group of matrices with Lie algebra $\lg$. We think of $M$ as the configuration space where our classical colored particle travels and
we think of the dual $\lg^*$ as the space of ``color charges'' or internal degrees of freedom.
Since $G$ is compact we can fix once and for all a bi-invariant metric on $G$, or equivalently, we endow $\mathfrak{g}$ with an Ad-invariant metric $\langle\cdot,\cdot\rangle$; with this metric we identify $\mathfrak{g}^*$ with $\mathfrak{g}$.
In its most general form the motion takes place in the adjoint bundle of a principal bundle $P$ with structure group $G$.
For reasons of exposition we shall assume that the bundles are trivial; this is actually no serious restriction once we impose our global conditions on $M$, which reduces our global problem to a local one.
Thus we consider $P=M\times G$ and the adjoint bundle given $M\times \lg$. In this case, a connection $A$ (the external Yang-Mills potential) is just an element 
$A\in C^{\infty}(M,T^*M\otimes\mathfrak{g})=\Lambda^{1}(M,\mathfrak{g})$. Since $\mathfrak{g}$ is a Lie algebra of matrices, we can think of $A$ as a matrix of 1-forms in that Lie algebra. 
More generally, we can consider forms of any degree with values in $\mathfrak{g}$; the set of such forms with degree $k$
is denoted $\Lambda^{k}(M,\mathfrak{g})$. 
Given an external potential $A\in \Lambda^{1}(M,\mathfrak{g})$, we can associate to it a fundamental quantity: its {\it curvature} or field strentgh. It is defined as
\[F:=F_{A}=dA+A\wedge A\in \Lambda^{2}(M,\mathfrak{g})\]
where $(A\wedge A)_{x}(v,w):=[A_{x}(v),A_{x}(w)]$ for $x\in M$ and $v,w\in T_{x}M$, and $[\cdot,\cdot]$ is the commutator of matrices.
Using the metric $g$, given $\xi\in\lg$, we can define a $(1,1)$-tensor $\mathbb F^{\xi}:TM\to TM$ uniquely by
\[g_{x}(\mathbb{F}^{\xi}_{x}(v),w)=\langle F_{x}(v,w),\xi\rangle\]
for all $x\in M$ and $v,w\in T_{x}M$. The field $\mathbb F$ will play the role of a generalized Lorentz force.
The connection $A$ induces a covariant derivative in the adjoint bundle which we denote by $D$.

We are now ready to write down the equations of motion. The system lives in $TM\times \lg$ and the ODEs determining the trajectories $t\mapsto (\gamma(t),\dgamma(t),\xi(t))\in TM\times\lg$ are given by

\begin{equation}\label{Wong's}
\left\{ \begin{array}{lr}
 \nabla_{\dot\gamma}\dot\gamma=\mathbb{F}^{\xi}_{\gamma}(\dgamma), \\[.5em]
   D_{\dot\gamma}\xi=0,
       \end{array} \right.
\end{equation}
where $\nabla$ is the Levi-Civita connection of $g$. These coupled equations are referred to as {\it Wong's equations}. Wong introduced them as a model for the motion for a classical colored spinless particle under the influence of an external Yang-Mills potential $A$ \cite{W70}. The first equation in \eqref{Wong's} describes a particle under the influence of a generalized Lorentz force parameterized by the color charge $\xi$ while the second asserts that the color charge is parallel translated along this trajectory in configuration space. 
The equations reduce to the Lorentz equations in the abelian case $G=U(1)$. The Kaluza-Klein framework gives an alternative description in which the particle travels in a geodesic relative to a certain metric on the principal bundle $P$. Kerner \cite{Ke68} generalized Kaluza-Klein's idea from the abelian to the non-abelian case. The Wong and the Kaluza-Klein formulations were put into a symplectic framework by Sternberg \cite{Ste77} and Weinstein \cite{We78} respectively. See \cite{Sni79,M84} for a further discussion of symplectic aspects; the books \cite{GS90,Mo2002} contain extensive background on these equations.
It is worth noting that while Wong's equations might not have effective physical consequences to high energy physics (there is no such a thing as a ``classical quark''), the equations do appear in many different contexts. For instance they appear naturally when considering the so-called {\it isoholonomic problem}: fix $x_0,x_{1}\in M$; among all curves joining $x_0$ to $x_1$ with fixed parallel transport operator between $x_0$ and $x_1$ (with respect to $A$), find the one with smallest length.
Montgomery proved in \cite{Mo90} that $\gamma$ is extremal for the isoholonomic problem iff there exists $\xi(t)$ such that $(\gamma(t),\xi(t))$ solves Wong's equations. For other fascinating connections (like the Cat's problem and non-abelian Berry's phase), we refer to \cite{Mo90,Mo2002}. Equations \eqref{Wong's} also appear as suitable semi-classical limits of connection Laplacians \cite{HPS83,ST89}.

A quick analysis of \eqref{Wong's} reveals two kinematic constraints: $\gamma$ must travel at constant speed and $\xi$ must remain in the adjoint orbit it started on (the latter constraint implies in particular that the norm $\|\xi\|$ of the color charge is always a constant of the motion). For this reason, it makes sense from now on to restrict our motion to the compact phase space $SM\times \mathcal O$, where $SM$ is the unit sphere bundle of $M$ and $\mathcal O$ is a fixed adjoint orbit in $\lg$.  This defines a flow $\Phi_{t}:SM\times\mathcal O\to SM\times\mathcal O$ and we shall use $\phi=(x,v,\xi)\in SM\times \mathcal O$ to denote points in the phase space. We shall refer to the curve $\gamma$ as a {\it Yang-Mills geodesic} (YM-geodesic). 

For any $\phi=(x,v,\xi)\in SM\times \mathcal O$, we define the travel time 
$$\tau: SM\times\mathcal O\to [0,+\infty]$$
to be the first non-negative time when the YM-geodesic $\gamma$ determined by $\phi\in SM\times\mathcal O$ exits $M$. If $\tau$ is finite for any $\phi\in SM\times \mathcal O$, then we say that the pair $(g,A)$ (or the flow $\Phi_{t}$) is {\it non-trapping}. Define
$$\partial_\pm SM = \{(x,v) \in SM \,;\, x \in \partial M, \pm\langle v,\nu \rangle \leq 0 \}$$
where $\nu$ is the outer unit normal vector to $\partial M$. We denote the restriction of $\tau$ onto $\p_+SM\times \mathcal O$ by $\ell$, i.e. $\ell:=\tau|_{\p_+SM\times \mathcal O}$.
The {\it scattering relation} is the map
$$\mathcal S:\p_+SM\times \mathcal O \to \p_-SM\times \mathcal O$$
given by $\mathcal S(\phi):=\Phi_{\ell(\phi)}(\phi)$.
The data $(\mathcal S, \ell)$ constitute the {\it lens data} for the system and our inverse problem is to recover the external potential $A$ from the lens data $(\mathcal S,\ell)$. As it is common with problems involving connections, the problem has a natural ambiguity. Given a smooth map $u:M\to G$ (a gauge), it is well known that 
\[\tilde{A}:=u^{-1}du+u^{-1}Au\]
is a connection with curvature
\[F_{\tilde{A}}=u^{-1}F_{A}u.\]
This implies $\mathbb{F}_{\tilde{A}}^{\xi}=\mathbb{F}_{A}^{u\xi u^{-1}}$ since the inner product in the Lie algebra is invariant under the adjoint action. Using this, one may easily check that if $(\gamma(t), \xi(t))$ satisfies \eqref{Wong's} for $A$, then $(\gamma(t),u^{-1}(\gamma(t))\xi(t) u(\gamma(t)))$ satisfies \eqref{Wong's} for $\tilde{A}$.
If in addition we have $u|_{\partial M}=e$ (identity element in $G$), then $A$ and $\tilde{A}$ have the same lens data. Thus the inverse problem is:

\medskip

{\it Suppose $(\mathcal S_{A},\ell_{A})=(\mathcal S_{\tilde{A}},\ell_{\tilde{A}})$. Does there exist a smooth map $u:M\to G$ such that $u|_{\partial M}=e$ and $\tilde{A}=u^{-1}du+u^{-1}Au$?}

\medskip

We will solve this inverse problem under some assumptions, the most important of which is a convexity condition related to the flow $\Phi_{t}$. To describe this condition let us denote by $\mathbb{X}$ the vector field on $SM\times\mathcal O$ associated with $\Phi_t$. The flow is non-trapping iff there exists a smooth function in phase space  $h:SM\times\mathcal O\to\R$ such that $\mathbb{X}^2h>0$ \cite[Theorem 6.4.1]{DH72}. The global convexity condition is a considerable enhancement of non-trapping in which $h$ only depends on $x\in M$.
If $h(x,v,\xi)=f(x)$, then straightforward calculations using \eqref{Wong's} show that $\mathbb{X}h(x,v,\xi)=df_{x}(v)$ and
\[\mathbb{X}^2 h=\mathbb{X}(df)(x,v,\xi)=\text{Hess}_{x}(f)(v,v)+\langle F_{x}(v,\nabla f(x)),\xi\rangle.\]
Motivated by this, we give the following definition.

\begin{Def} A smooth function $f:M\to \R$ is said to be {\it strictly YM-convex} if
\[\text{Hess}_{x}(f)(v,v)+\langle F_{x}(v,\nabla f(x)),\xi\rangle>0\]
for all $(x,v,\xi)\in SM\times\mathcal O$. Similarly, we shall say that $x\in \partial M$ is strictly YM-convex if 
$$\Lambda_{x}(v,v)+\langle F_{x}(v,\nu(x)),\xi\rangle>0$$
for any $v\in S_x\p M$ and $\xi\in \mathcal O$, where $\Lambda$ is the second fundamental form of $\p M$.
If this holds for all $x\in\partial M$, then we say that $\partial M$ is strictly YM-convex.
\label{def:convex}
\end{Def}

We note that if $f$ is strictly YM-convex, then it is strictly convex in the usual sense in Riemannian geometry. Indeed, replacing $v$ by $-v$ in Definition \ref{def:convex} gives $\text{Hess}_{x}(f)(v,v)>0$
for all $(x,v)\in SM$.  This forces $M$ to be contractible (cf. \cite[Lemma 2.1]{PSUZ16}) and thus all bundles over $M$ are trivial; this explains why we considered trivial bundles from the start.
Observe also that the notion of strict YM-convexity depends on the adjoint orbit $\mathcal O$ that we have fixed. We are now ready to state our main global result.

\begin{thm}Let $(M,g)$ be a compact Riemannian manifold with boundary and dimension $\geq 3$ and let $\mathcal O$ be an adjoint orbit that contains a basis of $\lg$.
Let $A$ and $\tilde{A}$ be two Yang-Mills potentials such that
\begin{enumerate}
\item $\p M$ is strictly YM-convex with respect to both $(g,A)$ and $(g,\tilde{A})$;
\item $i^*A=i^*\tilde{A}$ where $i:\p M\to M$ is the canonical inclusion.
\end{enumerate}

If $(g,A)$ admits a strictly YM-convex function and $(\mathcal S_{A},\ell_{A})=(\mathcal S_{\tilde{A}},\ell_{\tilde{A}})$, then there exists a smooth function $u:M\to G$ such that $\tilde{A}=u^{-1}du+u^{-1}Au$ and $u|_{\p M}=e$.
 \label{thm:main}
\end{thm}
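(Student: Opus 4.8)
The plan is to run a layer-stripping (foliation) argument in the spirit of the local boundary rigidity method of Stefanov--Uhlmann--Vargo and the local ray transform inversion of Uhlmann--Vasy, with the scattering rigidity theory for connections of Paternain--Salo--Uhlmann supplying the ``engine'' on each leaf; the genuinely nonlinear feature, namely that the YM-geodesics and the parallel transport of the colour charge both depend on the unknown potential, is tamed by the locality of the scheme.

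\textbf{Step 1: Boundary determination.} First I would show that, after composing $\tilde A$ with a gauge $u_0:M\to G$ with $u_0|_{\partial M}=e$, one may assume $A=\tilde A$ to infinite order along $\partial M$. Hypothesis (2) gives equality of the tangential parts at $\partial M$, while strict YM-convexity of $\partial M$ (hypothesis (1)) means that the short YM-geodesics that are almost tangent to $\partial M$ probe the full jet of the potential transversally to $\partial M$; equality of $(\mathcal S,\ell)$ then forces all normal derivatives to agree modulo a tangential gauge, which one removes by constructing $u_0$ order by order. This is a local computation at the boundary, parallel to the geodesic case.

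\textbf{Step 2: The foliation and the induction.} Let $f$ be a strictly YM-convex function for $(g,A)$. Since $f$ is then strictly convex in the Riemannian sense, $M$ is contractible, $f$ attains its maximum $c_{\max}$ on $\partial M$, and its regular level sets $\Sigma_c=f^{-1}(c)$ are strictly YM-convex hypersurfaces. For $c\in(\inf_M f,\,c_{\max}]$ put $\Omega_c=\{x\in M: f(x)>c\}$; these caps exhaust $M$ as $c\downarrow\inf_M f$ (the final leaf being handled by continuity, or by Step 1 if $\inf_M f$ is attained on $\partial M$). I would prove that
\[
\mathcal C=\bigl\{\,c : \text{there is } v_c:\Omega_c\to G \text{ with } v_c\equiv e \text{ near }\partial M \text{ and } \tilde A=v_c^{-1}dv_c+v_c^{-1}Av_c \text{ on }\Omega_c \,\bigr\}
\]
is nonempty, open and closed in $(\inf_M f,\,c_{\max}]$, with $\sup\mathcal C=c_{\max}$ coming from Step 1 (treating $\partial M$ as the first convex leaf); running the induction down to $\inf_M f$ produces the gauge $u$ on all of $M$. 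Nonemptiness near $c_{\max}$ and closedness use uniform smooth bounds on the $v_c$ from the local result below together with compactness; openness and the passage across a leaf reduce to the local statement of Step 3, applied at each $p\in\Sigma_c$ and patched — here the hypothesis that $\mathcal O$ contains a basis of $\lg$ enters, so that the $\mathrm{Ad}$-valued parallel transports, hence the local gauges up to the centre of $G$, are determined and agree on overlaps inside the already-recovered region. YM-convexity of the leaves for $\tilde A$ where it is needed follows from gauge-invariance of the YM-convexity condition (the pairing $\langle F(v,\nu),\xi\rangle$ is invariant and $\mathcal O$ is an adjoint orbit) together with $A=\tilde A$ across the current leaf.

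\textbf{Step 3: The local engine.} The key assertion is: if $\Sigma$ is strictly YM-convex at $p$, if $A=\tilde A$ on the outer side $\{f>f(p)\}$ near $p$, and if the lens data agree, then $A$ and $\tilde A$ are gauge equivalent near $p$ by a gauge that is $e$ on the outer side. To prove it, introduce boundary normal coordinates for $\Sigma$ and consider the short YM-geodesics that dip slightly into the inner side. Because $A=\tilde A$ on the outer side, the trajectory and the parallel transport of $\xi$ differ only through the values of $A-\tilde A$ in a thin inner layer; a pseudolinearization of Wong's equations — comparing the two flows along a common curve and using equality of $\mathcal S$ and $\ell$ — shows that a localized, $\mathrm{ad}$-attenuated, matrix-weighted geodesic ray transform of $A-\tilde A$ (equivalently of $d(A-\tilde A)$ modulo a gauge derivative) vanishes near $\Sigma$, up to error terms that are quadratically small in $A-\tilde A$ on the thin layer. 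One then invokes the microlocal inversion of Uhlmann--Vasy: the normal operator of this local transform, suitably conjugated, is elliptic in the scattering (or b-) calculus near $\Sigma$, yielding first $A-\tilde A\in C^\infty$ up to $\Sigma$, and then, after imposing a Coulomb-type gauge relative to $\Sigma$ and absorbing the quadratic errors by an iteration following the treatment of the attenuated transform for connections, the desired local gauge $v$.

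\textbf{Main obstacle.} The principal difficulty is Step 3, and inside it the nonlinearity of Wong's equations: the probing YM-geodesics are not fixed but depend on the unknown potential, so there is no global linearization as in the classical X-ray transform. The foliation converts this into a sequence of problems in thin layers on which $A=\tilde A$ on one side, so the geometry is a genuine perturbation and the geodesic-variation terms become of the right size to be absorbed into the elliptic estimate; keeping the bookkeeping of these errors and the gauge-fixing closed uniformly in the layer is the technical heart of the argument. A secondary obstacle is the global assembly in Step 2 — consistently gluing the local gauges along each leaf and letting the family $\{v_c\}$ survive in the limit — which rests on the uniqueness in Step 3, the spanning hypothesis on $\mathcal O$, and contractibility of $M$.
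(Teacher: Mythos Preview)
Your overall architecture---boundary jet determination, local result at a YM-convex point, layer stripping along the level sets of $f$, then extension across the minimum---matches the paper's. But your description of Step~3 contains a real misconception that would make the argument much harder than it is.

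The pseudo-linearization is \emph{exact}, not perturbative. The Stefanov--Uhlmann identity compares the two flows by writing $\Phi_{\ell}(\phi)-\tilde\Phi_{\ell}(\phi)$ as $\int_0^{\ell}\frac{\partial\tilde\Phi}{\partial\phi}(\ell-s,\Phi(s,\phi))\,(\mathbb X-\tilde{\mathbb X})(\Phi(s,\phi))\,ds$, and equality of lens data makes the left side vanish identically. Because $\mathbb X-\tilde{\mathbb X}$ is \emph{linear} in $(F-\tilde F,\,A-\tilde A)$ (see the local form of Wong's equations), this is an exact weighted X-ray transform $I_w[\mathcal F,\hat c\mathcal A]=0$ along YM-geodesics of $(g,A)$, with smooth weights coming from $d\tilde\Phi$. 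There are no ``quadratically small error terms'' to absorb and no iteration; in particular you do not need $A=\tilde A$ on the outer side to make the linearization valid---that equality is used only to smooth the weights (via boundary-jet determination) and to localize the support of $[\mathcal F,\hat c\mathcal A]$.

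Two further points where your sketch drifts from what actually happens. First, the primary unknown in the linear problem is the curvature difference $\mathcal F=F-\tilde F$, viewed as a $\mathfrak g$-valued $2$-form: the Uhlmann--Vasy normal operator is shown to be elliptic in the scattering calculus on $2$-forms, and \emph{this} is where the hypothesis that $\mathcal O$ contains a basis of $\mathfrak g$ enters (to kill the $\xi$-direction in the symbol), not in the gluing of gauges. The $1$-form piece $\mathcal A=A-\tilde A$ is controlled afterwards by passing to the \emph{normal gauge} $A_x=\tilde A_x=0$ (not a Coulomb gauge), which gives $\partial_x\mathcal A_y=\mathcal F_{xy}$ and hence a Poincar\'e-type estimate $\|\mathcal A\|\lesssim c\|\mathcal F\|$ on the thin layer; combined with the elliptic estimate for $\mathcal F$ this closes without iteration. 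Second, the gluing of local gauges and the final extension across the minimum point use only the first-order PDE $du=u\tilde A-Au$: uniqueness along curves forces agreement on overlaps, and compactness of $G$ gives the uniform bound on $du$ needed to extend $u$ continuously (then smoothly) to the single missing point. Contractibility of $M$ is not invoked for the gluing.
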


 
 Let us comment first on the condition on the adjoint orbit $\mathcal O$. Clearly some assumption is needed as the following trivial example shows: if $\mathcal O=\{0\}$ is the trivial adjoint orbit, then $\mathbb F^{0}=0$ and equations \eqref{Wong's} just become the equations of geodesics and $\xi(t)\equiv 0$. In this situation it is impossible to recover $A$ from lens data, since the latter does not take into account the external field $A$ at all. The condition that $\mathcal O$ contains a basis of $\lg$ ensures a proper coupling between $g$ and $A$ and it is actually easy to satisfy. For instance in the abelian case $G=U(1)$ one simply needs a non-zero charge: $\mathcal O=\{\xi\}$ with $0\neq \xi\in i\mathbb R$. In the case of $G=SU(2)$, the adjoint orbits are concentric spheres and we just need the sphere not to reduce to the origin. In fact for any simple Lie algebra $\lg$ the condition will hold as long as $\mathcal O$ is not trivial; this follows right away from the observation that the vector space spanned by $\mathcal O$ is an ideal in $\lg$. Similarly if $\lg$ semi-simple, the adjoint orbit of $\xi$ spans $\lg$ iff the projection of $\xi$ onto each simple factor of $\lg$ is non-zero.
In our proof of Theorem \ref{thm:main} the condition on $\mathcal O$ will naturally arise when proving ellipticity of certain pseudo-differential operators and when establishing a determination result for the boundary jet of the external field $A$.

 To illustrate why $i^*A=i^*\tilde{A}$ is required in Theorem \ref{thm:main}, let us consider again the abelian case $G=U(1)$. Since the adjoint orbits are just points, $\xi(t)$ is constant, so only the curvature of the external field is participating in Wong's equations \eqref{Wong's} and hence we cannot expect to recover more than the curvature of the external field. Theorem \ref{thm:main} shows that with additional boundary information like $i^*A=i^*\tilde{A}$, we can recover the external field up to gauge equivalence.
 
 The main assumptions in Theorem \ref{thm:main} are of course the convexity ones. We shall explain their relevance while we give a synopsis of the main ideas in the proof of the theorem. The proof follows the template laid out by Stefanov, Uhlmann and Vasy in their recent proof of boundary and lens rigidity for Riemannian metrics
 \cite{UV16,SUV16,SUV14,SUV17}.  We refer to \cite{Croke04,S08,U14,UZ16} for surveys, additional relevant references, and context for the lens and boundary rigidity problem in the Riemannian case.

The template when applied to our setting works as follows. (We emphasize that in our setting the metric $g$ is fixed and known and we are only interested in recovery of the external field $A$.) 
 \begin{enumerate}
 \item The main result underlying the proof is a local result near a convexity point of the boundary (see Theorem \ref{local thm} below) and the passage from local to global is achieved using the strictly YM-convex function by marching from the boundary into the inside in a layer stripping argument. Since our problem has a gauge this requires some care since some gluing and extensions of the local gauges are necessary. For this we use repeteadly the PDE satisfied by the gauges: $du=u\tilde{A}-Au$.

\item The local result is proved via a {\it pseudo-linearization} that reduces the nonlinear problem to a local linear one near a convexity point at the boundary.

\item The local linear problem involves of a new type of X-ray transform $I_{w}$ with weights. This transforms acts on pairs $[f,\beta]$, where $f$ is a $\lg$-valued 2-form and $\beta$ is a matrix valued 1-form.  The component $f$ is essentially the difference of the curvatures $F_{A}-F_{\tilde{A}}$.  We note that to be able to work with smooth weights we do need a boundary determination result for the jet of the connection in a suitable gauge.
Proving local injectivity of this transform will complete the proof.

\item To prove local injectivity we use the groundbreaking techniques from \cite{UV16}. We introduce a localized operator that plays the role of the normal operator $I^{*}_{w}I_{w}$ and we make sure it fits Melrose's scattering calculus \cite{Me94}. To obtain the Fredholm property in this calculus, one needs to prove that the boundary symbol is elliptic. We achieve this when we restrict the operator to (scattering) $\lg$-valued 2-forms.
Once the Fredholm property is derived, we prove injectivity when our connections are expressed in the 
 {\it normal gauge}.
   
\end{enumerate}

Theorem \ref{thm:main} illustrates how flexible and powerful the approach laid out by Stefanov, Uhlmann and Vasy is. In the non-abelian case, the system given by Wong's equations \eqref{Wong's} is unlike anything considered before since the motion has a component running in $\mathcal O$ affecting the curves in $M$. Implementing the scheme brings additional novel features, like the new X-ray transform mentioned in item (3) above. Unlike the boundary rigidity case \cite{SUV17} we will not need to make additional modifications to the operator to prove ellipticity thanks to the structure of \eqref{Wong's} that involve directly the curvature of the external field. 
 
A predecessor to Theorem \ref{thm:main} appears in \cite{HZ16} for the abelian case $G=U(1)$.  Another application of the scheme above are the results in \cite{PSUZ16} in which the problem of recovering a connection from parallel transport along geodesics is considered, but in this case, the underlying dynamical system (the geodesic flow) is unaffected by the external field.

Let us give now a large class of examples to which Theorem \ref{thm:main} applies. From the discussion above we know that we have to start with a Riemannian manifold $(M,g)$ with strictly convex boundary and admitting a strictly convex function $f$. Examples of such manifolds are discussed in detail in  \cite{PSUZ16} and include manifolds of non-negative sectional curvature (these could contain conjugate points).
Consider now two connections $A$ and $\tilde{A}$ which are compactly supported inside $M$; thus the boundary is strictly YM-convex with respect to both connections and $i^*A=i^*\tilde{A}$.
Select now an adjoint orbit $\mathcal O$ containing a basis of $\lg$ (we have already mentioned that if $G$ is simple, all non-trivial orbits will have that property). Given any positive number $\lambda$, the set $\lambda\mathcal O$ is obviously also an adjoint orbit containing a basis of $\lg$. If $\lambda$ is sufficiently small, we see from Definition \ref{def:convex} that a strictly convex function $f$ remains YM-convex and the theorem will apply.   On the other hand if $\lambda$ becomes too large, the effect of the Lorentz force increases and one may expect the dynamics to develop localized trapping (as in the abelian case), thus blocking YM-convexity.

This paper is organized as follows. Section \ref{sec_prelim} contains preliminaries and sets up the scene for the pseudo-linearization argument using the key integral identity from \cite{SU98}.
Section \ref{sec_jet} proves that after performing a suitable gauge transformation, the lens data determines the jet of the external field $A$ at the boundary. For this we need to assume that we know $i^*A$ and that $\mathcal O$ contains a basis of $\mathfrak{g}$. The argument is somewhat involved and uses the key identity \eqref{integral identity 2} below. The result on the jet is needed to ensure that later on we can modify the weights in the linear problem to make them smooth. Section \ref{section:local} states the local non-linear problem and explains how the pseudo-linearization is applied to reduce the non-linear problem to the injectivity of a linear X-ray transform with matrix weights. As already pointed out, this X-ray transform has not appeared in this form before in the literature and has some unique features produced by the specific form of the Wong equations \eqref{Wong's}.
In Section \ref{sec:injectivity of I_w} we prove injectivity for the local X-ray transform and Section \ref{section:proofglobal} completes the proof of the global Theorem \ref{thm:main}.

\bigskip

\noindent {\bf Acknowledgements.} 
GPP and HZ were supported by EPSRC grant EP/M023842/1. GU was partly supported by NSF, a Si-Yuan Professorship at HKUST and a FiDiPro Professorship of the Academy of Finland.
GPP and HZ would like to express their gratitude to the University of Washington at Seattle for hospitality while part of this work was being carried out. 
We are grateful to Leo Butler for a helpful discussion concerning adjoint orbits and to Alexander Strohmaier for pointing out that Wong's equations appear as semi-classical limits. Finally, we thank the referee for many valuable comments and corrections.

\section{Preliminaries} \label{sec_prelim}
In this section we collect various facts that will be needed later on. The first is the expression in local coordinates
of Wong's equations \eqref{Wong's}. We take coordinates $(z^{i},v^{i})$ in $TM$ and pick a basis $\{{\bf e}_{1},\cdots,{\bf e}_{d}\}$ of  $\mathfrak{g}$ so that $\xi_{\alpha}$ are the coordinates of $\xi\in\mathfrak g$ (later on we will assume that the basis is contained in a given adjoint orbit $\mathcal O$).
In these coordinates \eqref{Wong's} has the form
\begin{equation}\label{Wong's local}
\left\{ \begin{array}{lr}
 \frac{dz^i}{dt}=v^i, \\[.5em]
  \frac{dv^i}{dt}=-\Gamma_{jk}^iv^jv^k+ g^{ij} F^\alpha_{jk}v^k \xi_\alpha,\\[.5em]
  \frac{d\xi_\alpha}{dt}=-\xi_\beta c^\beta_{\alpha \mu}A^\mu_i v^i.
       \end{array} \right.
\end{equation}
Here $\Gamma_{jk}^i$'s are the Christoffel symbols with respect to the metric $g$, $A^{\alpha}_{i}$ are the components of the external field $A$ (connection), $F^\alpha_{jk}$'s are the components of the Yang-Mills field strength $F$ (curvature) and $c^\beta_{\alpha \mu}$'s are the structure constants of the Lie algebra in the chosen basis. In particular, the generating vector field of the flow $\Phi_{t}$ on $SM\times \mathcal O$ is 
\begin{equation}\label{generating vector}
\mathbb{X}=v^i\frac{\p}{\p z^i}+(-\Gamma_{jk}^iv^jv^k+g^{ij}\xi_\alpha F^\alpha_{jk}v^k)\frac{\p}{\p v^i}-\xi_\beta c^\beta_{\alpha \mu}A^\mu_i v^i\frac{\p}{\p \xi_\alpha}.
\end{equation}
We can of course also consider the flow $\Phi_t$ in $TM\times \mathfrak g$ with associated vector field $\mathbb{X}$ also given by \eqref{generating vector}. 

\subsection{Pseudo-linearization}

As we mentioned in the introduction the non-linear problem will be reduced to a linear one via a process we call {\it pseudo-linearization}. The idea behind it is quite simple and it is based on the following consideration first utilized in \cite{SU98}. Let $Z$ be a manifold
with two vectors fields $X_{i}$, $i=1,2$ and let $\Phi_{t}^{i}$ denote their flows.
Fix $x\in Z$ and $t>0$. Consider the curve
\[[0,t]\ni s\mapsto \Gamma(s):=\Phi^{2}_{t-s}\circ\Phi_{s}^{1}(x).\]
Obviously it connects the point $\Phi_{t}^{2}(x)$ to $\Phi_{t}^{1}(x)$. Computing the tangent vector to $\Gamma$ is straightforward using the chain rule:
\[\dot{\Gamma}(s)=d_{\Phi_{s}^{1}(x)}\Phi_{t-s}^{2}(X_{1}(\Phi_{s}^{1}(x))-X_{2}(\Phi_{s}^{1}(x))).\]
Thus
\[\int_{0}^{t}\dot{\Gamma}(s)\,ds=\Gamma(t)-\Gamma(0)=\Phi_{t}^{1}(x)-\Phi^{2}_{t}(x).\]
Of course in a manifold this does not make sense but it is certainly fine in Euclidean space.
Suppose now that $Z$ has boundary and given $x\in Z$, let $\tau_i(x)$ be the exit time for $s\mapsto \Phi^{i}_{s}(x)$, $i=1,2$. Thus if we set $t=\tau_1(x)$ we see that $t-s=\tau_{1}(\Phi_{s}^{1}(x))$.
Hence if we introduce the {\it weight}
\[\mathcal W(x):=d_{x}\Phi^{2}_{\tau_{1}(x)}\]
the integral above becomes
\begin{equation}
\int_{0}^{\tau_{1}(x)}\mathcal W(X_{1}-X_{2})(\Phi^{1}_{t}(x))\,dt=\Phi^{1}_{\tau_{1}(x)}(x)-\Phi^{2}_{\tau_{1}(x)}(x).
\label{eq:psl}
\end{equation}
Having the same lens data means that $\tau_{1}(x)=\tau_{2}(x)$ and $\Phi^{1}_{\tau_{1}(x)}(x)=\Phi^{2}_{\tau_{2}(x)}(x)$ whenever $x\in \p Z$ (assuming these times are finite) and hence the right hand side in \eqref{eq:psl} vanishes.

The idea is to apply this to $Z=TM\times\lg$ and the flows $\Phi_{t}$ and $\tilde{\Phi}_{t}$ corresponding to two different external fields $A$ and $\tilde{A}$. Hence we take coordinates in $M$ that naturally give us coordinates in $TM$. We also pick a basis of the Lie algebra $\lg$ as before and we work as if we were in Euclidean space. Since we will apply \eqref{eq:psl} locally this is no restriction at all.


Let $\Phi(t,\phi):=\Phi_{t}(\phi)$,  $\dim M=n$, $\dim G=d$ and write in local coordinates $\Phi:=(X,\Theta,\Xi)$. Hence if we fix $t$,
\begin{equation}\label{weight matrix}
\frac{\p \Phi}{\p \phi}(t,\phi)=\begin{pmatrix} \frac{\p X}{\p z} & \frac{\p X}{\p v} & \frac{\p X}{\p \xi}\\[.5em]  \frac{\p \Theta}{\p z} & \frac{\p \Theta}{\p v} & \frac{\p \Theta}{\p \xi}\\[.5em] \frac{\p \Xi}{\p z} & \frac{\p \Xi}{\p v} & \frac{\p \Xi}{\p \xi} \end{pmatrix}
\end{equation}
is a $(2n+d)\times (2n+d)$ matrix function on $TM\times \mathfrak g$. Since $\Phi(0,\phi)=\phi$, it is clear that for all $\phi=(z,v,\xi)\in T M\times \mathfrak g$
\begin{equation}\label{t=0}
\frac{\p \Phi}{\p \phi}(0,\phi)=\text{Id}_{(2n+d)\times (2n+d)}.
\end{equation}
The weight $\mathcal W$ is just
\[\mathcal W(\phi)=\frac{\p \tilde{\Phi}}{\p \phi}(\tau(\phi),\phi).\]
and hence \eqref{eq:psl} becomes the following very useful identity which we note here and will be used repeatedly later on:
\begin{equation}
\int_0^{\tau(\phi)}\frac{\p \tilde{\Phi}}{\p \phi}(\tau(\phi)-s,\Phi(s,\phi))(\mathbb X-\tilde{\mathbb X})(\Phi(s,\phi))\,ds=\Phi(\tau(\phi),\phi)-\tilde\Phi(\tau(\phi),\phi).
\label{integral identity 2}
\end{equation}
In particular, we will restrict $\phi$ to the compact phase space $SM\times \mathcal O$.

\section{Determination of the boundary jet}
\label{sec_jet}

As a first step towards the proof of Theorem \ref{thm:main}, we will make use of \eqref{integral identity 2} to show that the lens data determines the boundary jet of $A$, therefore $F$, up to a gauge transformation (note that in this paper the metric $g$ is given).

We consider boundary normal coordinates $z=(z',z^n)$, $z'=(z^1,\cdots,z^{n-1})$, with respect to $g$ on some neighborhood $U$ near $p$, so $z(p)=0$, $g=g_{ij}dz^i dz^j+(dz^n)^2$, $i,j<n$. Then we write $A=A_i dz^i$ with $A_i$, $i=1,\cdots,n$, (locally) Lie algebra valued functions. The curves (geodesics) $\gamma(t)=\{(z',t): z'\,\, \mbox{fixed}\}$ are normal to the boundary $z^n=0$ with $\dot\gamma=\p_n$, and $\gamma$ depends on $z'$ smoothly.  Let $u:[0,\varepsilon) \to G$ solve the following transport equation along each $\gamma$
$$\dot u+A_{\gamma}(\dot\gamma)u=0,\quad u(0)=e.$$
Then $u$ induces a smooth map $u:U\to G$ and
$$\p_n u(z)+A_n(z)u(z)=0, \quad u|_{z^n=0}=e.$$ 
The advantage of the boundary normal coordinates in the construction of the map $u$ is that it easily induces a global gauge transformation, such that in some collar neighborhood of $\p M$, $u$ is defined by the above construction.
If we set $A'=u^{-1}d u+u^{-1} A u$ (notice that $(g,A)$ and $(g,A')$ have the same lens data), by a simple calculation one can show that $A'_n=0$ in $U$ near $p\in \p M$. In addition, $\iota^* A=\iota^* A'$ where $\iota:\p M\to M$ is the inclusion map. To determine the boundary jet of $A'$ near $p$, we only need to determine $\p^k_n A'_j|_{z^n=0}$ for all $k\geq 0$ and $j<n$. Observe that $F'=u^{-1}Fu$, $u|_{\p M}=e$, thus $p\in \p M$ is strictly YM-convex with respect to $(g,A)$ if and only if it is strictly YM-convex with respect to $(g,A')$.


\begin{prop}\label{boundary jet of A} Assume that $\p M$ is strictly YM-convex at $p\in \p M$ with respect to $(g,A)$ and that $\mathcal O$ contains a basis of $\mathfrak{g}$. There exists $u:M\to G$ with $u=e$ on $\p M$ such that if we define $A'=u^{-1}du+u^{-1}Au$, then the lens data $(\mathcal S,\ell)$ on $(z,v,\xi)$ for $(z,v)\in \p_+SM$ close to $S_p \p M$ and any $\xi\in \mathcal O$, together with $\iota^* A$ near $p$, determine $\p^k_n A'_j(p)$ for any $k\geq 0$, $j<n$. 
\end{prop}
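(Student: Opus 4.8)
The plan is to reformulate the statement as a uniqueness assertion and prove it by induction on the order of the normal derivative. So suppose $\tilde A$ is a second Yang--Mills potential with the same lens data near $S_p\p M$ and with $\iota^*\tilde A=\iota^*A$, and let $\tilde u$, $\tilde A'=\tilde u^{-1}d\tilde u+\tilde u^{-1}\tilde A\tilde u$, be the associated normal--gauge objects. Since the gauge is manufactured by solving a transport ODE along the $g$--normal geodesics, $A$ and $\tilde A$ share one and the same boundary collar $U$ of $p$ on which $A'_n=\tilde A'_n=0$, and $\iota^*A'=\iota^*A=\iota^*\tilde A=\iota^*\tilde A'$; consequently, on $\p M\cap U$ every tangential derivative of $(A'-\tilde A')_j$ ($j<n$) vanishes, while $(A'-\tilde A')_n\equiv 0$ on all of $U$. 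It therefore suffices to prove that $\p_n^k(A'-\tilde A')_j\equiv 0$ on $\p M\cap U$ for all $k\ge 0$ and $j<n$; the case $k=0$ is exactly $\iota^*A'=\iota^*\tilde A'$, and I induct on $k$. Two consequences of the normal gauge on $U$ will be used throughout: since $A'_n\equiv 0$ there, $(F'-\tilde F')_{jn}=-\p_n(A'-\tilde A')_j$ on $U$, and for $i,j<n$ the component $(F'-\tilde F')_{ij}$ is assembled from tangential derivatives of, and brackets among, the $(A'-\tilde A')_l$ ($l<n$) and the $A'_l,\tilde A'_l$.

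The engine is identity \eqref{integral identity 2} fed with grazing geodesics. Equal lens data near $S_p\p M$ annihilates the right--hand side of \eqref{integral identity 2}, so
\begin{equation}\label{jet identity}
\int_0^{\ell(\phi)}\frac{\p\tilde\Phi}{\p\phi}\big(\ell(\phi)-s,\Phi(s,\phi)\big)\,(\mathbb X-\tilde{\mathbb X})\big(\Phi(s,\phi)\big)\,ds = 0
\end{equation}
for all $\phi=(z,v,\xi)$ with $(z,v)\in\p_+SM$ near $S_p\p M$ and $\xi\in\mathcal O$. From \eqref{generating vector}, $\mathbb X-\tilde{\mathbb X}$ has vanishing $\p_z$--component, has $\p_{v^i}$--component $g^{ij}\langle(F-\tilde F)_{jl}v^l,\xi\rangle$, and has $\p_{\xi_\alpha}$--component $-\xi_\beta c^\beta_{\alpha\mu}(A-\tilde A)^\mu_i v^i$; so every normal Taylor coefficient of $\mathbb X-\tilde{\mathbb X}$ along $\p M\cap U$ is one of the quantities the induction controls. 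I would then fix $q\in\p M\cap U$ near $p$, $w\in S_q\p M$, $\xi\in\mathcal O$, and for small $\mu>0$ take $\phi_\mu=(q,v_\mu,\xi)$ with $v_\mu$ the inward unit vector whose tangential part is proportional to $w$ and with $\langle v_\mu,\nu\rangle=-\mu$. Strict YM--convexity of $(g,A)$ at $q$ (an open condition, hence valid throughout a neighborhood of $p$, and applied to both $w$ and $-w$) makes the $A$--geodesic $s\mapsto X(s,\phi_\mu)$ grazing: it remains $O(\mu)$--close to $q$ in the tangential variables, $\ell(\phi_\mu)=c_1\mu+O(\mu^2)$ with $c_1=c_1(q,w,\xi)>0$, and its normal coordinate satisfies $z^n(s)=\mu^2\rho(s/\mu)+O(\mu^3)$ for a fixed nonnegative profile $\rho$ positive on $(0,c_1)$.

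For the inductive step, assume $\p_n^m(A'-\tilde A')_j\equiv 0$ on $\p M\cap U$ for all $m\le k-1$ and $j<n$. Feeding this into the two identities above, the Leibniz rule, and $(A'-\tilde A')_n\equiv 0$ yields: $\p_n^m(F'-\tilde F')\equiv 0$ on $\p M\cap U$ for $m\le k-2$; on $\p M\cap U$, $\p_n^{k-1}(F'-\tilde F')_{jn}=-\p_n^k(A'-\tilde A')_j$ while $\p_n^{k-1}(F'-\tilde F')_{ij}=0$ for $i,j<n$; and $\p_n^m(A'-\tilde A')_i\equiv 0$ on $\p M\cap U$ for all $m\le k-1$ and all $i$. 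Hence, expanding $\mathbb X-\tilde{\mathbb X}$ in the normal coordinate about $\p M\cap U$, all terms of $z^n$--order $\le k-2$ drop out, so along $\Phi(s,\phi_\mu)$,
\begin{equation}\label{jet leading}
(\mathbb X-\tilde{\mathbb X})\big(\Phi(s,\phi_\mu)\big)=\frac{z^n(s)^{k-1}}{(k-1)!}\,\p_n^{k-1}(\mathbb X-\tilde{\mathbb X})\big(X'(s,\phi_\mu),0,\Theta(s,\phi_\mu),\Xi(s,\phi_\mu)\big)+O\big(z^n(s)^k\big).
\end{equation}
Substituting \eqref{jet leading} into \eqref{jet identity}, dividing by $\mu^{2k-1}$, and letting $\mu\to 0$ (the weight tends to the identity by \eqref{t=0}, $(X',\Theta,\Xi)(s,\phi_\mu)\to(q,w,\xi)$, the remainder is $O(\mu)$, and $\mu^{-(2k-1)}\int_0^{\ell(\phi_\mu)}z^n(s)^{k-1}\,ds\to\int_0^{c_1}\rho(\sigma)^{k-1}\,d\sigma>0$) gives $\p_n^{k-1}(\mathbb X-\tilde{\mathbb X})(q,w,\xi)=0$. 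By the component formulas, with $v=w$ tangent, $g^{in}=\delta^{in}$ in the collar, and the induction hypothesis, the $\p_z$, $\p_\xi$, and $\p_{v^i}$ ($i<n$) parts of this identity vanish automatically, and the $\p_{v^n}$ part reads $\big\langle\sum_{l<n}\p_n^k(A'-\tilde A')_l(q)\,w^l,\,\xi\big\rangle=0$. Letting $\xi$ range over a basis of $\lg$ contained in $\mathcal O$ forces $\sum_{l<n}\p_n^k(A'-\tilde A')_l(q)\,w^l=0$; letting $w$ range over $T_q\p M$ then gives $\p_n^k(A'-\tilde A')_j(q)=0$ for all $j<n$, closing the induction and proving the proposition.

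The hard part will be the analysis of the grazing family: establishing the asymptotics $\ell(\phi_\mu)=c_1\mu+O(\mu^2)$ and $z^n(s)=\mu^2\rho(s/\mu)+O(\mu^3)$ with remainders uniform in $(q,w,\xi)$, the smoothness and positivity of $\rho$, and enough uniformity to legitimately divide \eqref{jet identity} by $\mu^{2k-1}$ and pass to the limit in \eqref{jet leading}; this is the analogue of the grazing--ray estimates underlying \cite{SUV14,SUV17,UV16}. The remaining ingredients are essentially bookkeeping: the Leibniz computation that all lower--order curvature terms vanish under the induction hypothesis; the fact that the normal gauge makes $(F'-\tilde F')_{jn}$ literally $-\p_n(A'-\tilde A')_j$, so that the $(k-1)$st curvature derivative carries exactly the unknown $k$th connection derivative; and the hypothesis that $\mathcal O$ contains a basis of $\lg$, which is precisely what turns the family of scalar identities $\langle\,\cdot\,,\xi\rangle=0$ into the vanishing of the $\lg$--valued normal derivative.
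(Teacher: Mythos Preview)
Your approach is correct but takes a genuinely different route from the paper's. The paper compares $A$ (after passing to the normal gauge) with the trivial connection $A_0=0$, so that the right-hand side of \eqref{integral identity 2} is an explicitly known function of the lens data and $g$; it then differentiates the resulting identity repeatedly in the grazing parameter $t$ (with $v^n(t)=bt$) and extracts the boundary jet of $F$ order by order as $t\to 0$, needing essentially $2K$ differentiations in $t$ to reach $\partial_n^K F$. This produces a constructive recipe for $\partial_n^k A'_j(p)$ in terms of $(\mathcal S,\ell)$ and $\iota^*A$ (as the paper notes in the remark following the proof). Your reformulation as a uniqueness statement---comparing two connections with the same lens data so that the right-hand side of \eqref{integral identity 2} vanishes---lets you replace the repeated $t$-differentiation by a single Taylor expansion of $\mathbb X-\tilde{\mathbb X}$ in the normal variable $z^n$ along the grazing family; the induction hypothesis kills all $z^n$-orders below $k-1$, and the rescaling $s=\mu\sigma$ together with division by $\mu^{2k-1}$ isolates $\partial_n^{k-1}(\mathbb X-\tilde{\mathbb X})(q,w,\xi)$ in one stroke. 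The trade-off is that your argument is not constructive (it establishes uniqueness, which is exactly what the paper actually uses via Lemma~\ref{same boundary jet}), but the bookkeeping is considerably lighter than the paper's careful tracking of which terms in the iterated $t$-derivatives carry $\partial_n^K F$. The grazing asymptotics $\ell(\phi_\mu)=c_1\mu+O(\mu^2)$ and $z^n(s)=\mu^2\rho(s/\mu)+O(\mu^3)$ that you flag as the hard part are the direct analogue of the paper's computation that $\ell'(0)\neq 0$, and follow from strict YM-convexity via the implicit function theorem and standard ODE perturbation theory; this is the one place your sketch needs to be made precise.
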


The result of Proposition \ref{boundary jet of A} is local; indeed to determine the boundary jet of $A'$ at $p$, we only require $u$ to be defined near $p$ in $M$. 

\begin{proof}


As discussed above, after some gauge transformation, we may assume that the connection $A$ has normal component $A_{n}=0$. To use the integral identity \eqref{integral identity 2}, let $A_0$ just be the zero connection, so $F_0=0$ too. 
Let $\X_0$ be the corresponding generating vector for $(g,A_0)=(g,0)$, $\phi\in \p_+SM\times \mathcal O$, then
\begin{equation}\label{jet 1}
\int_0^{\ell(\phi)}\frac{\p {\Phi_0}}{\p \phi}(\ell(\phi)-s,\Phi(s,\phi))(\X-\X_0)(\Phi(s,\phi))\,ds=\Phi(\ell(\phi),\phi)-\Phi_0(\ell(\phi),\phi).
\end{equation}
The right-hand side of \eqref{jet 1} is totally determined by the lens data and the metric $g$.
 
 
Now given $w\in S_p\p M$, we define a smooth map $v:[0,\delta)\to S_p M$ for $0<\delta\ll 1$ such that in the boundary normal coordinates
$$v(t)=v'(t)\frac{\p}{\p z'}+v^n(t)\frac{\p}{\p z^n}=a(t)w+bt\frac{\p}{\p z^n},$$
where $b$ is a positive constant. Since $v(t)\in S_p M$, $a(t)=\sqrt{1-(bt)^2}$ and $v(0)=w$.
Since $\p M$ is strictly YM-convex at $p$, given $\xi\in \mathcal O$ and $t\in (0,\delta)$, there is a unique YM-geodesic $\gamma_{(p,v(t),\xi)}(s)$ with $\gamma_{(p,v(t),\xi)}(0)=p$, $\dot\gamma_{(p,v(t),\xi)}(0)=v(t)$, which exits $M$ at $\gamma_{(p,v(t),\xi)}(\ell(p,v(t),\xi))  \in \p M$ close to $p$. 
For fixed $\xi$, denote $(p,v(t),\xi):=\phi(t)$, $\ell(p,v(t),\xi):=\ell(t)$. Since $p\in\p M$ is strictly YM-convex, using the implicit function theorem, one can check that $\ell(t)$ is smooth on $[0,\delta)$. Moreover, $\ell(t)$ is monotonically decreasing as $t\to 0$. We claim that $\ell'(0)\neq 0$.

To prove the claim, we write $\ell(t)$ asymptotically
\begin{equation*}
\ell(t)=\ell(0)+\ell'(0) t+O(t^2)= \ell'(0)t +O(t^2).
\end{equation*}
If $\ell'(0)=0$, then the decay rate of $\ell(t)$, as $t\to 0$, is at least quadratic.
On the other hand, in the boundary normal coordinates if we write $\gamma_{\phi(t)}(s)=(z'_t(s),z^n_t(s))$, then
\begin{equation*}
z^n_t(s)=z^n_t(0)+\dot z^n_t(0) s+O(s^2)=v^n(t) s+O(s^2)=bt s+O(s^2). 
\end{equation*}
By assumption, $z^n_t(\ell(t))=0$, then 
\begin{equation}\label{if ell'(0)=0}
0=bt\, \ell(t)+O(\ell^2(t))
\end{equation}
for any $t\in [0,\delta)$. Since $b> 0$ and $\ell(t)$ decays faster than any linear expression as $t\to 0$, the first term on the right hand side of \eqref{if ell'(0)=0} decays strictly slower than the second term as $t\to 0$. This implies that (for sufficiently small $\delta>0$) \eqref{if ell'(0)=0} cannot hold on the whole interval $[0,\delta)$, which produces a contradiction, therefore $\ell'(0)\neq 0$. Moreover, since $\ell(0)=0$ and $\ell(t)>0$ for $t>0$, we must have $\ell'(0)>0$. By rescaling the parameter $b$ of $v(t)$, we may assume that $\ell'(0)=1$. Note also that $\p_t v^n(0)=b>0$.

Now, by \eqref{jet 1} 
\begin{equation}\label{jet 2}
\begin{split}
\int_0^{\ell(t)} \frac{\p {\Phi_0}}{\p \phi}&(\ell(t)-s,\Phi(s,\phi(t)))(\X-\X_0)(\Phi(s,\phi(t)))\,ds=R(t).
\end{split}
\end{equation}
Note that we have full knowledge of $R(t)$. Denote $\Phi(t):=\Phi(\ell(t),\phi(t))$.
Taking the derivative of both sides of \eqref{jet 2} with respect to $t$, it follows that
\begin{equation}\label{jet 3}
\begin{split}
\frac{\p {\Phi_0}}{\p \phi}&(0,\Phi(t))(\X-\X_0)(\Phi(t))\,\ell'(t)\\
& +\int_0^{\ell(t)} \p_t\Big(\frac{\p {\Phi_0}}{\p \phi}(\ell(t)-s,\Phi(s,\phi(t)))(\X-\X_0)(\Phi(s,\phi(t)))\Big)\,ds=R'(t).
\end{split}
\end{equation}
Let $t\to 0$, so $\ell(t)\to 0$, $\ell'(t)\to 1$ and $\phi(t)\to (0,w,\xi)$, then \eqref{generating vector}, \eqref{t=0} and \eqref{jet 3} imply that
$$ Id_{(2n+d)\times (2n+d)}\begin{pmatrix} 0 \\[.5em] \xi_\alpha g^{ij}(0) F^\alpha_{jk}(0)  w^k\\[.5em] -\xi_\beta c^\beta_{\alpha\mu} A^\mu_k(0) w^k \end{pmatrix}=R'(0).$$
In particular, we get the values of 
$\xi_\alpha g^{ij}(0) F^\alpha_{jk}(0)  w^k$ from the equality above for any $\xi\in \mathcal O$ and $w\in S_p\p M$. 
Since $\mathcal O$ contains a basis of $\mathfrak g$, one obtains the values of $F_{ij}(0), \, 1\leq i\leq n,\, 1\leq j\leq n-1$. By definition $F=dA+A\wedge A$, since $A|_{\p M}$ is given by our assumption (so the tangential derivatives $\p_j A$, $j<n$ are automatically recovered from the boundary data), we get the values of $(dA)_{ij}(0)$, $1\leq i\leq n$, $1\leq j\leq n-1$, from $F_{ij}(0)$.
On the other hand, $\p_n A_j=(dA)_{nj}$ for $j<n$ (recall that $A_n=0$). Thus we uniquely determine $\p_n A_j(0)$ for $j<n$. Similarly we recover $\p_n A_j(z)$ for any $z\in \p M$ close enough to $p$.


Next we want to recover $\p_n^2 A_j(0)$. 
 Notice that the first term on the left hand side of \eqref{jet 3} is known now for $t$ sufficiently small, and hence we may rewrite the equality as
\begin{equation}\label{jet 4}
\int_0^{\ell(t)} \p_t\Big(\frac{\p {\Phi_0}}{\p \phi}(\ell(t)-s,\Phi(s,\phi(t)))(\X-\X_0)(\Phi(s,\phi(t)))\Big)\,ds=R(t)
\end{equation}
where $R(t)$ is a known quantity. From now on we always use $R(t)$ to denote quantities that are known for $t$ small. 
We use $\frac{\p \Phi_0}{\p \phi}(s,t)$ as the short notation for $\frac{\p \Phi_0}{\p \phi} (\ell(t)-s,\Phi(s,\phi(t)))$ and differentiate both sides of \eqref{jet 4} with respect to $t$ to derive
\begin{equation}\label{jet 5}
\begin{split}
\p_t \frac{\p \Phi_0}{\p \phi}(s,t)&\Big|_{s=\ell(t)}(\X-\X_0)( \Phi(t))\ell'(t)+\frac{\p (\X-\X_0)}{\p \Phi}(\Phi(t))\frac{\p \Phi(t)}{\p \phi}\phi'(t)\ell'(t)\\
& +\int_0^{\ell(t)} \p_t^2\Big(\frac{\p {\Phi_0}}{\p \phi}(\ell(t)-s,\Phi(s,\phi(t)))(\X-\X_0)(\Phi(s,\phi(t)))\Big)\,ds=R'(t),
\end{split}
\end{equation}
with 
\begin{equation*}
\phi'(t)=\frac{d}{dt} (0,v(t),\xi)=(0,v'(t),0).
\end{equation*}


Notice that the first term on the left hand side of \eqref{jet 5} is known for $t\geq 0$ small. Let $t\to 0$; using \eqref{t=0} we recover the value of
$$\frac{\p (\X-\X_0)}{\p \Phi}(\Phi(0))\frac{\p \Phi(0)}{\p \phi}\phi'(0)=\frac{\p(\X-\X_0)}{\p v}(0,w,\xi)v'(0).$$
However, the term on the right hand side above does not contain higher order derivatives of $A$, which means that taking the limit of \eqref{jet 5} as $t\to 0$ does not provide any new information.

To address the issue, we keep on taking derivatives with respect to $t$. As discussed above, we can rewrite \eqref{jet 5} as
\begin{equation}\label{jet 6}
\begin{split}
 &\frac{\p (\X-\X_0)}{\p \Phi}(\Phi(t))\frac{\p \Phi(t)}{\p \phi}\phi'(t)\ell'(t)\\
& +\int_0^{\ell(t)} \p_t^2\Big(\frac{\p {\Phi_0}}{\p \phi}(\ell(t)-s,\Phi(s,\phi(t)))(\X-\X_0)(\Phi(s,\phi(t)))\Big)\,ds=R(t).
\end{split}
\end{equation}
Now we differentiate both sides of \eqref{jet 6} with respect to $t$ to derive
\begin{equation}\label{jet 7}
\begin{split}
 & \frac{d}{dt}\Big(\frac{\p (\X-\X_0)}{\p \Phi}(\Phi(t))\frac{\p \Phi(t)}{\p \phi}\phi'(t)\ell'(t)\Big)+\p_t^2 \frac{\p \Phi_0}{\p \phi}(s,t)\Big|_{s=\ell(t)}(\X-\X_0)(\Phi(t))\ell'(t)\\
&+2 \p_t \frac{\p \Phi_0}{\p \phi}(s,t) \p_t(\X-\X_0)(\Phi(s,\phi(t)))\Big|_{s=\ell(t)}\ell'(t)+\p_t^2(\X-\X_0)(\Phi(s,\phi(t)))\Big|_{s=\ell(t)}\ell'(t)\\
& +\int_0^{\ell(t)} \p_t^3\Big(\frac{\p {\Phi_0}}{\p \phi}(\ell(t)-s,\Phi(s,\phi(t)))(\X-\X_0)(\Phi(s,\phi(t)))\Big)\,ds=R'(t).
\end{split}
\end{equation}
Let $t\to 0$; similar to the analysis after \eqref{jet 5}, the values of the second and third terms on the left hand side of \eqref{jet 7} are known at $t=0$. Thus we recover the value of the following limit
\begin{equation}\label{limit 1}
\lim_{t\to 0}\,\,\frac{d}{dt}\Big(\frac{\p (\X-\X_0)}{\p \Phi}(\Phi(t))\frac{\p \Phi(t)}{\p \phi}\phi'(t)\ell'(t)\Big)+\p_t^2(\X-\X_0)(\Phi(s,\phi(t)))\Big|_{s=\ell(t)}\ell'(t).
\end{equation}

Notice that asymptotically
$$\Phi(s,\phi(t))=\phi(t)+s\X(\phi(t))+O(s^2),$$
thus 
\begin{equation*}
\p_t\Phi(s,\phi(t))=\phi'(t)+O(s),\quad \p_t^2\Phi(s,\phi(t))=\phi''(t)+O(s),
\end{equation*}
and we get that
\begin{equation*}
\lim_{t\to 0}\,\,\p_t^2(\X-\X_0)(\Phi(s,\phi(t)))\Big|_{s=\ell(t)}
=\frac{\p^2(\X-\X_0)}{\p \Phi^2}(\phi(0))(\phi'(0))^2+\frac{\p (\X-\X_0)}{\p \Phi}(\phi(0))\phi''(0).
\end{equation*}
Using that $\phi(t)=(0,v(t),\xi)$, a simple calculation shows that the second term in \eqref{limit 1} does not contain the normal derivatives of $F$ (the curvature), i.e. it is known already. On the other hand, for $\Phi(t)=\Phi(\ell(t),\phi(t))$ we have
\begin{equation*}
\begin{split}
\frac{d}{dt}\Phi(t)&=\phi'(t)+\ell'(t)\X(\phi(t))+O(\ell(t));\\
\frac{\p \Phi(t)}{\p \phi}&=Id+\ell(t)\frac{\p \X}{\p \Phi}(\phi(t))+O(\ell^2(t)),\quad \frac{d}{dt}\frac{\p \Phi(t)}{\p \phi}=\ell'(t)\frac{\p \X}{\p \Phi}(\phi(t))+O(\ell(t)).
\end{split}
\end{equation*}
Thus
\begin{equation*}
\begin{split}
&\lim_{t\to 0}\,\,\frac{d}{dt}\Big(\frac{\p (\X-\X_0)}{\p \Phi}(\Phi(t))\frac{\p \Phi(t)}{\p \phi}\phi'(t)\ell'(t)\Big)\\
=& \frac{\p^2(\X-\X_0)}{\p \Phi^2}(\phi(0))\Big(\phi'(0)+\X(\phi(0))\Big)\phi'(0)+\frac{\p (\X-\X_0)}{\p \Phi}(\phi(0))\frac{\p \X}{\p \Phi}(\phi(0))\phi'(0)\\
& + \frac{\p (\X-\X_0)}{\p \Phi}(\phi(0))\Big(\phi''(0)+\phi'(0)\ell''(0)\Big).
\end{split}
\end{equation*}
Based on \eqref{generating vector}, the first component of $\X(\phi(0))$ is $w$ which is tangential, then one can easily check that the first and third terms above are known, i.e. they do not contain the normal derivatives of $F$.

The analysis above implies that we can recover the value of
\begin{equation}\label{limit 2}
\frac{\p (\X-\X_0)}{\p \Phi}(\phi(0))\frac{\p \X}{\p \Phi}(\phi(0))\phi'(0)=\frac{\p (\X-\X_0)}{\p \Phi}(0,w,\xi)\frac{\p \X}{\p v}(0,w,\xi)v'(0).
\end{equation} 
By \eqref{generating vector}, in matrix form,
$$\frac{\p \X}{\p v}(0,w,\xi)=\begin{pmatrix}Id_{n\times n} \\ U_v \\ U_{\xi}\end{pmatrix},$$
where the matrices $U_v$ and $U_{\xi}$ are known. Therefore the term containing the derivatives of $F$ in \eqref{limit 2} has the form
\begin{equation*}
\xi_\alpha g^{ij}(0)\p_m F^\alpha_{jk}(0)w^k \p_t v^m(0),
\end{equation*}
which is the new information we obtain from \eqref{limit 2}.
As before, since $\mathcal O$ contains a basis of $\mathfrak g$ (recall that we have recovered $F$ and we also know the tangential derivatives $\p_m F_{jk}$, $m<n$) we can determine the values of 
$$\p_n F_{nk}(0)\p_t v^n(0),\quad k<n.$$
Since $\p_t v^n(0)\neq 0$, we get the values of $\p_n F_{nk}(0)$ for $k<n$. By the definition of $F$, and the fact that $A_n=0$, we obtain the values of $\p_n(d A)_{nk}(0)=\p_n^2 A_k(0)$ for $k<n$. We also recover $\p_n^2 A_k(z)$ for $z$ close to $p$ in a similar way.


Then we determine $\p_n^k A(0)$ for any $k>2$ by induction. Assume that we have recovered $\p_n^k A(0)$ for $k\leq K$ with some $K\geq 2$, and hence $\p_n^i F(0)$ for $i\leq K-1$. 
We differentiate \eqref{jet 4} $2K$ times with respect to $t$ to get
\begin{equation}\label{jet 8}
\begin{split}
\sum_{j=1}^{2K}&\frac{d^{2K-j}}{dt^{2K-j}}\Big(\p_t^j\big(\frac{\p {\Phi_0}}{\p \phi}(\ell(t)-s,\Phi(s,\phi(t)))(\X-\X_0)(\Phi(s,\phi(t)))\big)\Big|_{s=\ell(t)}\ell'(t)\Big)\\
&+ \int_0^{\ell(t)} \p_t^{2K+1}\Big(\frac{\p {\Phi_0}}{\p \phi}(\ell(t)-s,\Phi(s,\phi(t)))(\X-\X_0)(\Phi(s,\phi(t)))\Big)\,ds=\frac{d^{2K}}{dt^{2K}}R(t).
\end{split}
\end{equation}
As can be seen from the calculations for the case of $K=2$, the key information is encoded in the derivatives of $\X-\X_0$. Since we have recovered $\p_n^k F$ for $k\leq K-1$, we also know $\frac{\p^k (\X-\X_0)}{\p \Phi^k}(\Phi(t))$ for any $k\leq K-1$. On the other hand, for any $k\geq 0$
\begin{equation*}
\begin{split}
\p_t\frac{\p^k(\X-\X_0)}{\p \Phi^k}(\Phi(s,\phi(t)))&=\frac{\p^{k+1}(\X-\X_0)}{\p \Phi^{k+1}}(\Phi(s,\phi(t)))\Big(\phi'(t)+O(s)\Big),\\
\frac{d}{dt}\frac{\p^k(\X-\X_0)}{\p \Phi^k}(\Phi(t))&=\frac{\p^{k+1}(\X-\X_0)}{\p \Phi^{k+1}}(\Phi(t))\Big(\phi'(t)+\ell'(t)\X(\phi(t))+O(\ell(t))\Big).
\end{split}
\end{equation*}
Recall that either $\phi'(0)$ or $\phi'(0)+\X(\phi(0))$ will eliminate the normal derivative of $\frac{\p^k(\X-\X_0)}{\p \Phi^k}(\phi(0))$, i.e. $\p_n \frac{\p^k(\X-\X_0)}{\p \Phi^k}(\phi(0))$. Then it is not difficult to check that the first part on the left hand side of \eqref{jet 8} only contains $\p_n^k F$, $k\leq K$. Moreover $\p_n^K F$ only appears in the terms containing $\frac{\p^K (\X-\X_0)}{\p \Phi^K}(\Phi(t))$.

Recall again the argument of the case when $K=2$, to determine the value of $\p_n F(0)$ or equivalently $\frac{\p (\X-\X_0)}{\p \Phi}(\phi(0))$, we need the appearance of the term $\frac{\p \X}{\p \Phi}(\phi(t))\phi'(t)$. Asymptotically
\begin{equation*}
\begin{split}
\frac{d}{dt}\Big(\p_t \Phi(s,\phi(t))\Big|_{s=\ell(t)}\Big)&=\frac{d}{dt}\Big(\phi'(t)+\ell(t)\frac{\p \X}{\p \Phi}(\phi(t))\phi'(t)+O(\ell^2(t))\Big)\\
&=\phi''(t)+\ell'(t)\frac{\p \X}{\p \Phi}(\phi(t))\phi'(t)+O(\ell(t)).
\end{split}
\end{equation*}
For $\p_n^K F(0)$, we need $\frac{\p \X}{\p \Phi}(\phi(t))\phi'(t)$ to appear $K$ times. 

Based on the above analysis, one can show that \eqref{jet 8} has the following form, with some non-zero constant $C_0$,
\begin{equation*}
\begin{split}
C_0\frac{\p {\Phi_0}}{\p \phi}(0,\Phi(t))& \frac{\p^K (\X-\X_0)}{\p \Phi^K}(\Phi(t))\Big[\frac{d}{dt}\Big(\p_t \Phi(s,\phi(t))\Big|_{s=\ell(t)}\Big)\Big]^K \ell'(t)+G(t)\\
&+ \int_0^{\ell(t)} \p_t^{2K+1}\Big(\frac{\p {\Phi_0}}{\p \phi}(\ell(t)-s,\Phi(s,\phi(t)))(\X-\X_0)(\Phi(s,\phi(t)))\Big)\,ds=R(t),
\end{split}
\end{equation*}
where $G(t)$ is known at $t=0$. Now let $t\to 0$, we recover the value of
\begin{equation*}
\frac{\p^K (\X-\X_0)}{\p \Phi^K}(\Phi(0))\Big(\frac{\p \X}{\p \Phi}(\phi(0))\phi'(0)\Big)^K.
\end{equation*} 
In local coordinates, by the assumption that $\mathcal O$ contains a basis of $\mathfrak g$ we get the values of
\begin{equation*}
\p_n^K F_{nk}(0)\big(\p_t v^n(0)\big)^K,\quad  k<n.
\end{equation*}
Since $\p_t v^n(0)\neq 0$, we recover $\p_n^K F_{nk}(0)$, and consequently $\p_n^{K+1} A_k(0)$ for $k<n$.

This completes the proof.
\end{proof}

\begin{rem}
As already noted, in this paper we have assumed that the metric $g$ is given. However, we can recover the boundary jets of both the metric $g$ and Yang-Mills potential $A$ from the lens data by the same method as in the proof of Proposition \ref{boundary jet of A}. For instance, we can take the auxiliary system to be $(g_0,0)$, where $g_0$ is a constant metric in the boundary normal coordinates. Notice that in \eqref{generating vector}, the Christoffel symbols $\{\Gamma^i_{jk}\}$ produce a quadratic dependence in $v$, while the terms involving $F$ are linear in $v$, so we can separate the information regarding $g$ and $A$ respectively. The determination of the boundary jet of a Riemannian metric $g$ from its lens data (or boundary distance function) has been considered before in \cite{LSU03,UW03,SU09}. The reference 
\cite{UW03} employs the integral identity \eqref{integral identity 2}, while \cite{SU09} considers the case where the boundary is not necessarily convex.
 A boundary determination problem related to polarization tomography is studied in \cite{Ho09}, where the metric $g$ is known and the underlying dynamical system is the usual geodesic flow. 

Notice that the proof of Proposition \ref{boundary jet of A} is constructive. In particular, when $G=U(1)$ our method gives a construction of the boundary jet of a magnetic field from its corresponding lens data.
\end{rem}

Proposition \ref{boundary jet of A} easily implies the following uniqueness result.

\begin{lemma}\label{same boundary jet} Suppose that $\mathcal O$ contains a basis of $\mathfrak{g}$.
Consider two Yang-Mills potentials $A$ and $\tilde A$ such that $\p M$ is strictly YM-convex with respect to both $(g,A)$ and $(g,\tilde{A})$, they have the same lens data, and $\iota^*A=\iota^*\tilde A$. There exists $u:M\to G$, $u|_{\p M}=e$, such that if we define $A'=u^{-1}du+u^{-1}Au$, then $A'$ and $\tilde A$ have the same boundary jet. 
\end{lemma}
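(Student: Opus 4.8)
The plan is to read off the lemma from Proposition \ref{boundary jet of A} by composing the two gauges it supplies. First I would apply Proposition \ref{boundary jet of A} to $(g,A)$: since $\p M$ is strictly YM-convex at every point, this produces a global gauge $u_1:M\to G$ with $u_1|_{\p M}=e$ such that, setting $A^{(1)}:=u_1^{-1}du_1+u_1^{-1}Au_1$, one has $A^{(1)}_n=0$ in a collar of $\p M$ and the lens data of $(g,A)$ together with $\iota^*A$ determine $\p_n^kA^{(1)}_j$ along all of $\p M$, for every $k\geq0$ and $j<n$. Combined with $A^{(1)}_n=0$ and with tangential differentiation of the recovered boundary functions, this determines the entire boundary jet of $A^{(1)}$. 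Applying the same construction to $(g,\tilde A)$ yields $\tilde u_1:M\to G$, $\tilde u_1|_{\p M}=e$, and $\tilde A^{(1)}:=\tilde u_1^{-1}d\tilde u_1+\tilde u_1^{-1}\tilde A\tilde u_1$, whose full boundary jet is likewise determined by the lens data of $(g,\tilde A)$ and $\iota^*\tilde A$.

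Next I would note that these two prescriptions depend on the same data. By hypothesis the lens data of $(g,A)$ and $(g,\tilde A)$ agree and $\iota^*A=\iota^*\tilde A$; moreover $\iota^*A^{(1)}=\iota^*A$ and $\iota^*\tilde A^{(1)}=\iota^*\tilde A$, because $u_1,\tilde u_1$ restrict to the identity on $\p M$, so the terms $u_1^{-1}du_1$ and $\tilde u_1^{-1}d\tilde u_1$ pull back to zero. Hence $A^{(1)}$ and $\tilde A^{(1)}$ have the same boundary jet at every point of $\p M$.

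Finally I would set $u:=u_1\tilde u_1^{-1}:M\to G$, so that $u|_{\p M}=e$, and $A':=u^{-1}du+u^{-1}Au$. By the composition rule for gauge transformations, $A'$ is the image of $A^{(1)}$ under the gauge $\tilde u_1^{-1}$, i.e. $A'=\tilde u_1\,d(\tilde u_1^{-1})+\tilde u_1A^{(1)}\tilde u_1^{-1}$; and the same rule, applied to $\tilde A$ (the gauge $\tilde u_1$ takes $\tilde A$ to $\tilde A^{(1)}$, hence $\tilde u_1^{-1}$ takes $\tilde A^{(1)}$ back to $\tilde A$), gives $\tilde A=\tilde u_1\,d(\tilde u_1^{-1})+\tilde u_1\tilde A^{(1)}\tilde u_1^{-1}$. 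These two expressions differ only through the entries $A^{(1)}$ and $\tilde A^{(1)}$, which are fed into the pointwise algebraic map $B\mapsto\tilde u_1B\tilde u_1^{-1}$ with $\tilde u_1$ a fixed smooth map; since $A^{(1)}$ and $\tilde A^{(1)}$ agree to infinite order along $\p M$, so do $A'$ and $\tilde A$, which proves the lemma.

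The only point that needs care is the bookkeeping of the gauge composition: to arrange that a gauge transform of $A$ --- rather than of $\tilde A$ --- has the boundary jet of $\tilde A$, one composes $u_1$ with $\tilde u_1^{-1}$ (not $\tilde u_1$) and checks that the product is still $e$ on $\p M$; the rest rests on the trivial observation that conjugation by a fixed smooth $G$-valued map preserves equality of boundary jets. I do not expect any substantive obstacle, since all the analysis has already been done in Proposition \ref{boundary jet of A}.
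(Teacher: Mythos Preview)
Your argument is correct and is essentially the route the paper has in mind when it says the lemma ``easily implies'' from Proposition~\ref{boundary jet of A}: put both $A$ and $\tilde A$ into normal gauge, use the proposition and the hypotheses to match the boundary jets of $A^{(1)}$ and $\tilde A^{(1)}$, then undo the $\tilde A$-gauge so that the comparison is with $\tilde A$ itself. Your bookkeeping of the gauge composition $u=u_1\tilde u_1^{-1}$ is right, and the observation that conjugation by the fixed smooth map $\tilde u_1$ preserves equality of infinite-order jets is all that is needed to finish.
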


 
\begin{rem}\label{WLOG}
Without loss of generality, from now on we only need to consider the lens rigidity problem for potentials $A$, $\tilde A$ with the same boundary jet. 
\end{rem}


\section{The local problem and pseudo-linearization}
\label{section:local}

In this section we set up the local problem. Assume that $\p M$ is strictly YM-convex at $p\in \p M$. 
For $(z,v)\in \p_+SM$ close to $S_p\p M$ and any $\xi\in \mathcal O$, the YM-geodesic $\gamma_\phi$, $\phi=(z,v,\xi)$, will stay in some small neighborhood of $p$ in $M$ and exits the neighborhood from $\p M$ again by the convexity assumption. The main local result is the following theorem which is of independent interest.

\begin{thm}\label{local thm}

Let $(M,g)$ be a compact Riemannian manifold with boundary and dimension $\geq 3$ and let $\mathcal O$ be an adjoint orbit that contains a basis of $\lg$.
Let $A$ and $\tilde{A}$ be two Yang-Mills potentials such that
\begin{enumerate}
\item $\p M$ is strictly YM-convex at $p\in \p M$ with respect to both $(g,A)$ and $(g,\tilde{A})$;
\item $i^*A=i^*\tilde{A}$ on $\p M$ near $p$.
\end{enumerate}

If $(\mathcal S,\ell)=(\tilde{\mathcal S},\tilde{\ell})$ for $(z,v,\xi)\in \p_+SM\times \mathcal O$ with $(z,v)$ near $S_p \p M$, then there exists a smooth function $u:M\to G$ with $u(z)=e$ for $z\in \p M$ close to $p$, such that $\tilde{A}=u^{-1}du+u^{-1}Au$ in $M$ near $p$.
\end{thm}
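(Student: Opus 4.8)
The plan is to follow the Stefanov--Uhlmann--Vasy template, reducing the local non-linear problem to the local injectivity of an X-ray transform with matrix weights via the pseudo-linearization identity \eqref{integral identity 2}. By Lemma \ref{same boundary jet} and Remark \ref{WLOG} we may assume from the outset that $A$ and $\tilde A$ have the same boundary jet at $p$; this is precisely what is needed to make the weights arising below smooth up to the boundary. First I would introduce boundary normal coordinates near $p$ and a boundary defining function $\rho$ with $\{\rho>c\}$ a strictly convex lens-shaped region $\cO_c$ shrinking to $p$ as $c\to 0$; strict YM-convexity at $p$ guarantees that for $c$ close enough to the value at $p$, every YM-geodesic through a point of $\cO_c$ enters and exits $\cO_c$ transversally, so the exit time is a smooth function on the phase space over $\cO_c$.

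Next I would apply \eqref{integral identity 2} with $\phi$ ranging over $S\cO_c \times \cO$ (not just boundary points): since $(\mathcal S,\ell)=(\tilde{\mathcal S},\tilde\ell)$ near $S_p\p M$ and $A,\tilde A$ agree near $\p M$, the right-hand side $\Phi(\tau,\phi)-\tilde\Phi(\tau,\phi)$ vanishes for $\phi\in \partial_+S\cO_c\times\cO$, giving
\[
\int_0^{\tau(\phi)} \frac{\p\tilde\Phi}{\p\phi}(\tau(\phi)-s,\Phi(s,\phi))\,(\mathbb X-\tilde{\mathbb X})(\Phi(s,\phi))\,ds = 0 .
\]
Now $(\mathbb X-\tilde{\mathbb X})$ is, by \eqref{generating vector}, a linear expression in the pair $[f,\beta]$ where $f = F_A - F_{\tilde A}$ (a $\lg$-valued 2-form) and $\beta = A-\tilde A$ (a matrix-valued 1-form): the $\p/\p v^i$ component involves $g^{ij}\xi_\alpha f^\alpha_{jk}v^k$ and the $\p/\p\xi_\alpha$ component involves $\xi_\beta c^\beta_{\alpha\mu}\beta^\mu_i v^i$. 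Thus the identity says that a weighted X-ray transform $I_w[f,\beta]$ vanishes on all YM-geodesics in $\cO_c$, where the weight is built from the Jacobian $\frac{\p\tilde\Phi}{\p\phi}$ (smooth because the jets agree). This is the transform $I_w$ referred to in item (3) of the introduction; I would record its precise mapping properties and note that the $\xi$-component of the identity, combined with $\mathcal O$ containing a basis of $\lg$, should allow one to first peel off $\beta$ up to the differential relation $d u = u\tilde A - A u$ — i.e. to reconstruct the gauge from $\beta$ once $\beta$ is known — so that the essential unknown is really $f$.

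Then I would invoke the local injectivity of $I_w$ (proved in Section \ref{sec:injectivity of I_w}) to conclude $f\equiv 0$ and $\beta$ is pure gauge near $p$: there exists smooth $u:U\to G$ with $u|_{\p M}=e$ near $p$ and $\tilde A = u^{-1}du + u^{-1}Au$ in $U$. The injectivity argument itself (which I would defer to the cited section) proceeds by forming a localized operator $N_c$ playing the role of $I_w^*I_w$, conjugated by an exponential weight $e^{-F/c}$ and placed in Melrose's scattering calculus \cite{Me94}; one checks the scattering principal symbol and, crucially, the boundary symbol are elliptic when $N_c$ is restricted to scattering $\lg$-valued 2-forms — here the hypothesis that $\cO$ contains a basis of $\lg$ enters decisively, and the structure of \eqref{generating vector} (curvature coupling linearly to $v$ through $\mathbb F^\xi$) makes the ellipticity work without the extra modifications needed in \cite{SUV17}. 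Fredholm plus a smallness/support argument in $c$ gives injectivity on the normal-gauge representatives. The main obstacle I anticipate is precisely this ellipticity of the boundary symbol of the localized normal operator on 2-forms: unlike the classical scalar case one must handle the matrix/Lie-algebra structure and verify that the algebraic spanning condition on $\cO$ is exactly what rules out a kernel at the boundary; everything else (the pseudo-linearization bookkeeping, smoothness of weights from boundary-jet agreement, and recovering the gauge $u$ from $\beta$ via its transport PDE) is comparatively routine.
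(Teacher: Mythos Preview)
Your proposal follows the paper's template closely: boundary-jet determination (Lemma \ref{same boundary jet}) so the weights are smooth, pseudo-linearization via \eqref{integral identity 2}, reduction to the weighted transform $I_w$ on the pair $[\mathcal F,\hat c\mathcal A]$, and the scattering-calculus ellipticity argument in the normal gauge. That is essentially the paper's route.

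One point deserves correction. You propose to use the $\xi$-component of the identity, together with the spanning property of $\mathcal O$, to ``peel off $\beta$'' and reduce to the 2-form $f$. The paper does \emph{not} do this, and it is not clear such a decoupling works: the weight in the $\xi$-row is $\partial\tilde\Xi/\partial\phi$, which is far from the identity and mixes the components. Instead the paper keeps only the $\Theta$-row (velocity component) of \eqref{integral identity 2}, obtaining the coupled transform $I_w[\mathcal F,\hat c\mathcal A]$ with no separation. The decoupling is achieved \emph{a priori} by fixing the normal gauge $A_x=\tilde A_x=0$ relative to the artificial foliation $\{x=\text{const}\}$: then $\mathcal A_x=0$ and $\mathcal F_{xy}=\partial_x\mathcal A_y$, so a Poincar\'e-type bound (Lemma \ref{stronger control}) gives $\|e^{-\digamma/x}\mathcal A\|\le\delta\|e^{-\digamma/x}\mathcal F\|$ for $c$ small. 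This lets one absorb the $\beta$-term on the right of the elliptic estimate (Corollary \ref{elliptic estimate}) into the left, yielding $\mathcal F=0$ and hence $\mathcal A=0$ directly. The final gauge is then $w=u\tilde u^{-1}$, where $u,\tilde u$ are the transformations taking $A,\tilde A$ to normal gauge. So the normal gauge is not merely where injectivity is checked at the end; it is the mechanism that replaces the ``peel off $\beta$'' step you sketched.
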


We will now use \eqref{integral identity 2} and the result on the boundary jet given by Lemma \ref{same boundary jet} to reduce the local non-linear problem to a linear integral geometry problem, i.e. a suitable X-ray transform with weights. The key identity \eqref{integral identity 2} gives us right away:

\begin{lemma}\label{integral identity lemma}
Assume that $\mathcal S(\phi)=\tilde{\mathcal S}(\phi)$ and $\ell(\phi)=\tilde\ell (\phi)$ for some $\phi\in \p_+SM\times \mathcal O$, then
\begin{equation}\label{integral identity 3}
\int_0^{\ell(\phi)}\frac{\p \tilde{\Phi}}{\p \phi}(\ell(\phi)-s,\Phi(s,\phi))(\mathbb X-\tilde{\mathbb X})(\Phi(s,\phi))\,ds=0.
\end{equation}
\end{lemma}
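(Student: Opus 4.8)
The plan is to read \eqref{integral identity 3} off the pseudo-linearization identity \eqref{integral identity 2} of Section~\ref{sec_prelim}; the entire argument is just bookkeeping on its two sides. First I would observe that for $\phi\in\p_+SM\times\mathcal O$ the exit time of the $A$-flow $\Phi$ is $\tau(\phi)=\ell(\phi)$ by the very definition of $\ell$, so that \eqref{integral identity 2} specializes to
\[
\int_0^{\ell(\phi)}\frac{\p\tilde\Phi}{\p\phi}\big(\ell(\phi)-s,\Phi(s,\phi)\big)(\mathbb X-\tilde{\mathbb X})(\Phi(s,\phi))\,ds=\Phi(\ell(\phi),\phi)-\tilde\Phi(\ell(\phi),\phi).
\]
Its left-hand side is already the left-hand side of \eqref{integral identity 3}, so it only remains to check that the right-hand side vanishes. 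By definition of the scattering relation, $\Phi(\ell(\phi),\phi)=\Phi_{\ell(\phi)}(\phi)=\mathcal S(\phi)$. For the second term I would invoke the equality of travel times: since $\ell(\phi)=\tilde\ell(\phi)$, the number $\ell(\phi)$ is exactly the exit time of the $\tilde A$-YM-geodesic issued from $\phi$, hence $\tilde\Phi(\ell(\phi),\phi)=\tilde\Phi_{\tilde\ell(\phi)}(\phi)=\tilde{\mathcal S}(\phi)$. The hypothesis $\mathcal S(\phi)=\tilde{\mathcal S}(\phi)$ then makes the right-hand side zero, which is precisely \eqref{integral identity 3}.

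Two small points I would make sure are in order. The identity \eqref{integral identity 2} was derived by writing $\Phi,\tilde\Phi$ in local coordinates and computing as in Euclidean space, via the curve $s\mapsto\Gamma(s)=\tilde\Phi_{\ell(\phi)-s}(\Phi(s,\phi))$ joining $\tilde\Phi_{\ell(\phi)}(\phi)$ to $\Phi_{\ell(\phi)}(\phi)$; this is legitimate here because, by strict YM-convexity near $p$, for $(z,v)$ close to $S_p\p M$ the YM-geodesics of both $A$ and $\tilde A$ stay in one fixed coordinate chart, and $\tilde\Phi_{\ell(\phi)-s}$ is defined for every $s\in[0,\ell(\phi)]$ once $M$ is slightly enlarged past $\p M$ (at $s=0$ this flow runs for the full $\tilde A$-exit time $\tilde\ell(\phi)$ and lands on $\p_-SM$). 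Second, one should note that the weight $\frac{\p\tilde\Phi}{\p\phi}$ and the integrand are smooth along this segment, which again follows from convexity, making $\ell$ and all the objects above depend smoothly on $\phi$.

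I do not expect a genuine obstacle here: the whole content lives in \eqref{integral identity 2}, and the lemma merely restates it at boundary points with coinciding lens data. The one step that deserves a moment of care is the identification $\tilde\Phi(\ell(\phi),\phi)=\tilde{\mathcal S}(\phi)$, which uses $\ell(\phi)=\tilde\ell(\phi)$ essentially --- without matching travel times one would be left with $\Phi(\ell(\phi),\phi)-\tilde\Phi(\ell(\phi),\phi)$ whose second term is not, a priori, the $\tilde A$-scattering image of $\phi$.
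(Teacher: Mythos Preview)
Your proposal is correct and is exactly the paper's approach: the paper simply says that the key identity \eqref{integral identity 2} ``gives us right away'' the lemma, and your argument spells out precisely this --- specialize $\tau(\phi)=\ell(\phi)$ for $\phi\in\p_+SM\times\mathcal O$, then use $\ell(\phi)=\tilde\ell(\phi)$ and $\mathcal S(\phi)=\tilde{\mathcal S}(\phi)$ to make the right-hand side vanish.
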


By \eqref{generating vector}
$$\mathbb X-\tilde{\mathbb X}=\xi_\alpha g^{ij}(F^{\alpha}_{jk}-\tilde F^{\alpha}_{jk})v^k\frac{\p}{\p v^i} -\xi_\alpha c^\alpha_{\beta \mu}(A^\mu_{k}-\tilde A^\mu_{k})v^k\frac{\p}{\p \xi_\beta}.$$ 
Thus from \eqref{integral identity 3}, if we denote for short the structure constants by $\hat c$
\begin{equation}\label{integral identity 4}
\int_0^{\ell(\phi)}\frac{\p \tilde{\Phi}}{\p \phi}(\ell(\phi)-s,\Phi(s,\phi))\begin{pmatrix} 0 \\  g^{-1}  \mathcal F(v)\cdot \xi \\ - \hat c \mathcal A(v)\cdot \xi \end{pmatrix}(\Phi(s,\phi))\,ds=0,
\end{equation}
where $\mathcal F=F-\tilde F$, $\mathcal A=A-\tilde A$ and $\cdot$ denotes the inner product in the Lie algebra.  Note that generally $\mathcal F$ is not the curvature of the new Yang-Mills potential $\mathcal A$. 
We only take the second row of \eqref{weight matrix}, namely
\begin{equation}\label{integral identity 5}
\int_0^{\ell(\phi)}\bigg(\frac{\p \tilde \Theta}{\p v} g^{-1}  \mathcal F(v)\cdot \xi -\frac{\p \tilde \Theta}{\p \xi}  \hat c  \mathcal A(v)\cdot \xi\bigg)(\ell(\phi)-s,\Phi(s,\phi))\,ds=0.
\end{equation}
Let $\Phi(s,\phi)=(\gamma(s),\dot\gamma(s),\xi(s))$ and $\tau(z,v,\xi)$ be the exit time for $(z,v,\xi)\in SM\times \mathcal O$, as $\ell(\phi)-s=\tau(\Phi(s,\phi))$, then \eqref{integral identity 5} can be rewritten as
\begin{equation}\label{integral identity 6}
\int \left\{W(\gamma(s),\dot\gamma(s),\xi(s))\mathcal F_{\gamma(s)}(\dot\gamma(s))\cdot \xi(s) + Q(\gamma(s),\dot\gamma(s),\xi(s))(\hat c\mathcal A)_{\gamma(s)}(\dot\gamma(s))\cdot \xi(s)\right\}\,ds=0.
\end{equation}
Here we treat $\F=(\F^\alpha_{jk}dx^k)_{n\times d}$ and $\hat{c}\A=(c^\alpha_{\beta \mu} \A^\mu_k dx^k)_{d\times d}$ locally as matrix valued 1-forms whose action on elements $\xi$ of the Lie algebra (as a column vector) is represented by vector inner product, and
\begin{align*}
W(\gamma(s),\dot\gamma(s),\xi(s))& :=\frac{\p \tilde \Theta}{\p v} \Big(\tau\big(\gamma(s),\dot\gamma(s),\xi(s)\big), \big(\gamma(s),\dot\gamma(s),\xi(s)\big)\Big) g^{-1}\big(\gamma(s)\big),\\
Q(\gamma(s),\dot\gamma(s),\xi(s))& :=-\frac{\p \tilde \Theta}{\p \xi}\Big(\tau\big(\gamma(s),\dot\gamma(s),\xi(s)\big), \big(\gamma(s),\dot\gamma(s),\xi(s)\big)\Big).
\end{align*}
Notice that $\p \tilde \Theta/\p v=Id_{n\times n}$ at $S_p\p M\times \mathcal O$, thus $W$ is invertible near $S_p\p M\times \mathcal O$. 
The left hand side of \eqref{integral identity 6} is a weighted X-ray transform of $[\mathcal F,\hat c\mathcal A]$ along a YM-geodesic $\gamma$ of $(g,A)$. 
We denote the left hand side of \eqref{integral identity 6} by $I_w[\mathcal F,\hat c\mathcal A]$. The linear inverse problem now is the local invertibility (up to natural gauge transformations) of $I_w$ near a strictly YM-convex point $p\in \p M$.

In the next section, we will use microlocal analysis to study this local invertibility question. To make the argument work, one necessary assumption is that the weights $W$ and $Q$ as functions on $SM\times \mathcal O$ are smooth near $S_p\p M\times \mathcal O$. However, generally this is not the case due to the lack of smoothness of the exit time $\tau(z,v,\xi)$ at $S(\p M)\times \mathcal O$. To remedy this inconvenience, we extend $M$ to a larger manifold $\tilde M$, and the metric $g$ smoothly to $\tilde M$. Moreover, by Lemma \ref{same boundary jet} and Remark \ref{WLOG}, we may extend $A$ and $\tilde A$ smoothly to $\tilde M$ with $A=\tilde A$ in $\tilde M\setminus M$, therefore $\supp [\F,\hat c\A]\subset M$.

Let $H$ be a (local) hypersurface near $p$ in $\tilde M$ such that $H$ is tangent to $\p M$ at $p$ and $H\cap M=\{p\}$. We can find  local coordinates $(x,y^1,\cdots y^{n-1})$ near $p$ in $\tilde M$ with $x$ the level set function and $y=(y^1,\cdots, y^{n-1})$ local coordinates on $H$ (e.g. by considering normal coordinates relative to $H$). In particular, we may assume that $H=\{x=0\}$ and $M\subset \{x\leq 0\}$. 
On the other hand, notice that $A$ and $\tilde A$ are extended identically into $\tilde M$. Since $\p M$ is strictly YM-convex at $p$ with respect to both $A$ and $\tilde A$, we may assume that $H$ is strictly YM-convex with respect to $A$ from $\{x\leq 0\}$. Then there exists $0<c_0\ll 1$ so that $\{x=\delta\}$ is strictly YM-convex with respect to $A$ from $\{x\leq \delta\}$ for any $|\delta|\leq c_0$. 

Given any $(z,v)\in \p_+SM$ close to $S_p \p M$ and $\xi\in \mathcal O$, we can extend the YM-geodesics $\gamma_{z,v,\xi}$ (with respect to $A$) and $\tilde\gamma_{z,v,\xi}$ (with respect to $\tilde A$) into $\tilde M$. Since $(g,A)$ and $(g,\tilde A)$ share the same lens data, by the convexity of the level sets of $x$, the two extended YM-geodesics are identical outside $M$ and exit $\{x\leq c_0\}$ in finite time. In other words, we can define the lens data (locally) on $\{x=c_0\}$. If $A$ and $\tilde A$ have the same lens data on $\p M$, they have the same lens data on $\{x=c_0\}$. Moreover, since $\{x=c_0\}$ and $\p M$ are disjoint, the new exit time function $\tau$ as defined on $S\{x\leq c_0\}\times \mathcal O$ is smooth in $S\{x<c_0\}\times \mathcal O$, which includes $SM\times \mathcal O$. Now we apply Lemma \ref{integral identity lemma} to $(z,v,\xi)\in \p_+S\{x\leq c_0\}\times \mathcal O$ and the resulting integral also equals zero due to equality of the lens data, and the corresponding new weights $W$ and $Q$ in \eqref{integral identity 6}, as defined on $S\{x\leq c_0\}\times\mathcal O$, are smooth near $S_p\p M\times\mathcal O$ as $\tau$. Notice that $[\F,\hat c\A]$ is supported in $M$, which means that we can modify the weights $W$ and $Q$ outside $M$ to make them smooth on the whole domain, without affecting the value of the integral. From now on, we only need to consider weighted ray transforms as \eqref{integral identity 6} with smooth weights $W$ and $Q$.



\section{Injectivity of the linear problem} \label{sec:injectivity of I_w}

\subsection{Preliminaries and strategy} Let us first describe the setting that we will use and the strategy of the proof. The latter is substantially based on the recent developments in \cite{UV16,SUV14}
and we will refer frequently to these references.

The problem is local, so the first step is to chose a convenient set of coordinates. We start with the coordinates
$(x,y^1,\cdots,y^n)$ near $p\in \p M$ from the previous section. To define the neighborhood near $p$ for our local problem, we pick some constant $c$ with $0<c<c_0$, and we may let $c$ be sufficiently close to $0$ later if necessary. 
Vectors which are close to $S_p\p M$ can be parameterized by $\lambda \p_x+\omega \p_y$ with $\omega\in \mathbb S^{n-2}$ and $\lambda\ll 1$. The convexity assumption of $\p M$ near $p$ implies that for any $(x,y)\in O:=\{x>-c\}\cap M$ and $\lambda$ sufficiently small (the upper bound depends on $(x,y)$), $\omega\in \mathbb S^{n-2}$, and any $\xi\in \mathcal O$, the curve $\gamma_{x,y,\lambda,\omega,\xi}$ will stay in the neighborhood $O$ before exiting from $\p M$. 

It is worth mentioning at this stage that the constant $c$ can be taken locally uniform, i.e. for $c$ sufficiently small, there exist $0<\epsilon\ll 1$, such that for $-\epsilon\leq \delta \leq \epsilon$, we may replace the set $O$ by $\{-c+\delta < x < \delta\}\cap M$ assuming that $c+\epsilon<c_0$ (so the level sets are strictly YM-convex) and carry out similar arguments in this translated neighborhood. This uniform property will be used in the proof of the global lens rigidity result later on.

For the sake of simplicity, we shift the level set function $x$ by $c$, so that $H=\{x=c\}$, thus the interior (or artificial) boundary of $O$ becomes $\{x=0\}$. Moreover, we denote $\Omega:=\{x>0\}\subset \tilde M$. See Figure 1 below. 

\begin{figure}[ht]\label{illustration}
\includegraphics[width=140mm]{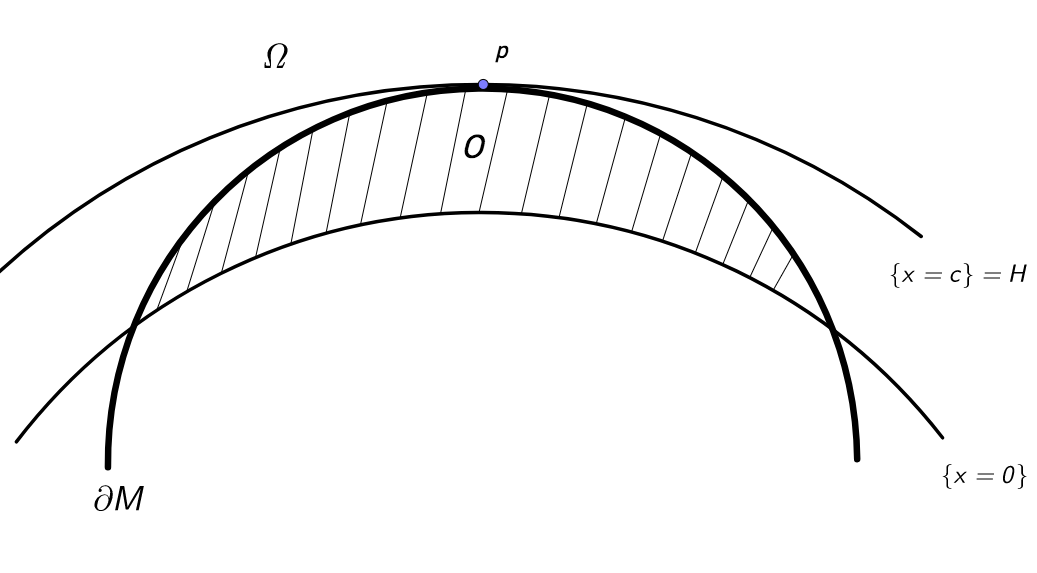}
\caption{The region below $\p M$ is the interior of $M$, while the region above $\{x=0\}$ is $\Omega$. The neighborhood $O=M\cap \Omega$ is the shadowed area. The hypersurface $H=\{x=c\}$ is tangent to $\p M$ at the point $p$.}
\end{figure}

Using the coordinates above, let 
$$\Phi(t)=(x(t),y(t),\lambda(t),\omega(t),\xi(t)),\quad \Phi(0)=(x,y,\lambda,\omega,\xi).$$
Given a $(n\times d)$-matrix valued 1-form $f$ and $(d\times d)$-matrix valued 1-form $\beta$, and motivated by the analysis in the previous section, we introduce a ray transform $I_w$ of the form
\begin{align*}
I_w[f,\beta](x,y,& \lambda,\omega,\xi):=\\
& \int \Big(W(\Phi(t)) f(x(t),y(t),\lambda(t),\omega(t))+Q(\Phi(t))\beta(x(t),y(t),\lambda(t),\omega(t))\Big)\cdot \xi(t)\,dt.
\end{align*}
To be consistent with the non-linear problem, we assume that the pair $[f,\beta]$ is supported in $M$, so the integral can be taken over $\mathbb R$. Recall that we are allowed to pick a basis of the Lie algebra $\mathfrak g$, denoted by $\{{\bf e}_1,\cdots, {\bf e}_d\}$, contained in the adjoint orbit $\mathcal O$. Without loss of generality, we may assume that under proper local coordinates ${\bf e}_j$ is the column vector whose entries are zeros except the $j$-th entry, which is $1$.
It is convenient for the time being to think of $f$ and $\beta$ as unrelated; later on they will be coupled as $[\mathcal F,\hat{c}\mathcal A]$ in the previous section.

Our goal is to understand the injectivity properties of $I_{w}$ near $p$. The main idea in \cite{UV16} is to consider a localized version of the normal operator $I_{w}^{*}I_{w}$ so that after careful adjustments it belongs to Melrose's scattering calculus with the objective of proving ellipticity in this calculus. This requires a careful analysis of the 
operator near the artificial boundary $\{x=0\}$, but provided that ellipticity is achieved, then the Fredholm property follows and one is in good shape to prove local injectivity. While this idea is simple to grasp, implementing it requires some technology and this has to be designed and tailored to the problem at hand.
Let us describe the ``technology'' that works for our problem.

Given an arbitrary real number $\digamma>0$, we define the following localized operator (in the basis $\{{\bf e}_1,\cdots, {\bf e}_d\}$): 
\begin{equation*}
N^\digamma[f, \beta](x,y)=(N^\digamma_{1}[f,\beta](x,y),\cdots,N^\digamma_{d}[f,\beta](x,y)),
\end{equation*}
where
\begin{align*}
& N^\digamma_{j}[f,\beta](x,y)\\
:=& e^{-\digamma/x} \int g_{sc}(\lambda\p_x+\omega\p_y) W^* (x,y,\lambda,\omega,{\bf e}_j)\chi_j\left(\frac{\lambda}{x}\right)\Big(I_we^{\digamma/x}[f,\beta]\Big)(x,y,\lambda,\omega,{\bf e}_j)\,d\lambda d\omega.
\end{align*}

Let us explain what each item is trying to achieve in this definition.
\begin{enumerate}

\item $g_{sc}$ denotes the scattering metric. In our local coordinates, we can consider them as
$$g_{sc}:=x^{-4}dx^2+x^{-2}h,$$
where $h$ is the metric on the level sets of $x$. The scattering metric $g_{sc}$ is not assumed to have any relation with the underlying metric $g$.
Using $g_{sc}$ we can convert vectors into co-vectors and hence we can make $N^\digamma[f,\beta]$ into a 1-form, so $N^\digamma[f,\beta]$ is a $(n\times d)$-matrix valued 1-form (as $f$). 
\item The functions $\chi_j\in C^{\infty}_c(\mathbb R),\, j=1,\cdots, d$ are even cut-off functions, so that for $0< x\ll 1$ only those $I_w e^{\digamma/x}[f,\beta](x,y,\lambda,\omega,\xi)$ with $\lambda$ sufficiently small will contribute to $N^\digamma [f,\beta]$. We further allow $\chi_j$ to depend smoothly on $y$ and $\omega$, and we can consider 
$$\chi_j(x,y,\lambda,\omega)=\chi_j\left(\frac{\lambda}{x},y,\omega\right)$$
as a smooth function on $S\Omega$.
Essentially the cut-off functions ensure that only geodesics belonging to a small cone around each point are being considered.
\item The adjoint $W^*$ of the weight $W$ is the correct object if $N^\digamma$ is going to mimick the normal operator associated to $I_{w}$ at least if we only pay attention to the first component of the pair $[f,\beta]$.
\item Conjugation by the exponential weights $ e^{-\digamma/x}$ will ensure that our operator will eventually belong to the scattering algebra. This will produce exponentially weak estimates as we approach the artificial boundary but it will be of no concern.

\end{enumerate}

An important feature of the construction is its dependence on the parameter $c$ as we will be able to attain injectivity only for small $c$. In this respect our situation will be no different to that in \cite[Section 2.5]{UV16}.
We now proceed to study the operator $N^{\digamma}$ in detail.

\subsection{Scattering calculus and Schwartz kernels}
In this subsection we introduce the elements needed from the scattering calculus and we compute various Schwartz kernels to show that $N^{\digamma}$ fits into this calculus. This subsection is not selfcontained and relies considerably on \cite{UV16,SUV14}. Section 2 in \cite{UV16} contains very relevant background on the scattering calculus and the original reference is \cite{Me94}.

For each fixed ${\bf e}_j$, the maps
$$\Gamma_+: S\tilde M\times [0,\infty)\rightarrow [\tilde M\times\tilde M; \diag],\,\;\; \Gamma_+(z,v,t)=(z, |z'-z|, \frac{z'-z}{|z'-z|})$$
and
$$\Gamma_-: S\tilde M\times (-\infty,0]\rightarrow [\tilde M\times\tilde M; \diag],\,\;\;, \Gamma_-(z,v,t)=(z, -|z'-z|, -\frac{z'-z}{|z'-z|})$$
are two diffeomorphisms near $S\tilde M\times \{0\}$. Here $z'=(x',y')=\gamma_{z,v,{\bf e}_j}(t)$, $[\tilde M\times\tilde M; \diag]$ is the {\it blow-up} of $\tilde M$ at the diagonal $z=z'$. At first glance, the definition of $z'=\gamma_{z,v,{\bf e}_j}(t)$ depends on ${\bf e}_j$, we might need to denote it by $z'_j$. However, one can introduce an additional (local) diffeomorphism $\phi_j$ near $z$ such that, if $z'=\gamma_{z,v}(t)$ stands for the usual geodesic, $\phi_j(z)=z$ and $\phi_j(z')=z'_j$ for $|t|$ small. Notice that since the Jacobian of $\phi_j$ is the identity at $z$, we can use this uniform parameter $z'$ from now on for any $\xi_j$. Similar to \cite{UV16}, we can use $(x,y,|y'-y|,\frac{x'-x}{|y'-y|},\frac{y'-y}{|y'-y|})$ as the local coordinates on $\Gamma_+(\supp\chi_j\times[0,\delta))$, and analogously for $\Gamma_-(\supp\chi_j\times (-\delta,0])$ the coordinates are $(x,y,-|y'-y|,-\frac{x'-x}{|y'-y|},-\frac{y'-y}{|y'-y|})$, when $x$ is close to $0$, for $\delta>0$ sufficiently small. 

As we want to study the microlocal properties of $N^\digamma$ up to the so-called scattering front face $x=0$, we apply the scattering coordinates $(x,y,X,Y)$ from \cite{UV16}, where 
$$X=\frac{x'-x}{x^2}, \, Y=\frac{y'-y}{x}.$$
Under the scattering coordinates
$$dt\, d\lambda\, d\omega=x^2|Y|^{1-n}J(x,y,X,Y)\,dXdY$$
with $J|_{x=0}=1$.
Notice that $|z'-z|\sim |y'-y|$ and we have an asymptotic expansion of $t$ as
$$t=|z'-z|+O(|z'-z|^2)=|y'-y|+O(|y'-y|^2)=x|Y|+O(x^2).$$
Let 
$$\alpha_j(z,v,t)=\frac{d^2 x}{dt^2}(\gamma_{z,v,{\bf e}_j}(t)),$$
in particular $\alpha_j(0,y,0,\omega,0)>0$ for any $\omega\in \mathbb S^{n-2}$ by the convexity assumption. 
Let $^{sc}T^*\overline\Omega$ be the scattering cotangent bundle over $\overline\Omega$ whose basis is $\{\frac{dx}{x^2}, \frac{dy}{x}\}$ with $\frac{dy}{x}$ a short notation for $(\frac{dy_1}{x},\cdots,\frac{dy_{n-1}}{x})$, while the dual space (scattering tangent bundle) is $^{sc}T\overline\Omega$ with the basis $\{x^2\p_x,x\p_y\}$. Let $(x',y',\lambda',\omega',\xi')$ be the short notation for $(x(t),y(t),\lambda(t),\omega(t),\xi(t))$. It was shown in \cite{UV16, SUV14} that the following asymptotic expansions in the scattering tangent and cotangent bases hold (for fixed ${\bf e}_j$):
\begin{equation}\label{lambda omega}
\begin{split}
& g_{sc}\Big((\lambda\circ\Gamma_{\pm}^{-1})\p_x+(\omega\circ\Gamma_{\pm}^{-1})\p_y\Big)\\
= & x^{-1}\bigg(\Big(\pm\frac{X-\alpha_j(x,y,,\pm \frac{xX}{|Y|},\pm \hat Y,\pm x|Y|)|Y|^2}{|Y|}+x\tilde\Lambda_\pm(x,y,\frac{x
  X}{|Y|},\hat Y,x|Y|)\Big)\,\frac{dx}{x^2}\\
&\qquad\qquad+\Big(\pm\hat Y+x|Y|\tilde\Omega_\pm(x,y,\frac{x
  X}{|Y|},\hat Y,x|Y|)\Big)\,\frac{h(\p_y)}{x}\bigg)
\end{split}
\end{equation}
and
\begin{equation}\label{lambda omega prime}
\begin{split}
& (\lambda'\circ\Gamma_{\pm}^{-1})\p_x+(\omega'\circ\Gamma_{\pm}^{-1})\p_y\\
= & x^{-1}\bigg(\Big(\pm \frac{X+\alpha_j(x,y,\pm \frac{xX}{|Y|},\pm \hat
  Y,\pm x|Y|)|Y|^2}{|Y|}+x|Y|^2\tilde\Lambda'_\pm(x,y,\frac{x
  X}{|Y|},\hat Y,x|Y|)\Big)\,x^2\p_x\\
&\qquad\qquad+\Big(\pm \hat Y+x|Y|\tilde\Omega'_\pm(x,y,\frac{x
  X}{|Y|},\hat Y,x|Y|)\Big)\,x\p_y\bigg).
\end{split}
\end{equation}
Here $\tilde\Lambda_\pm$, $\tilde\lambda'_\pm$, $\tilde\Omega_\pm$ and $\tilde\Omega'_\pm$ are smooth functions and
$$\frac{xX}{|Y|}=\frac{x'-x}{|y'-y|},\quad \hat Y=\frac{y'-y}{|y'-y|},\quad x|Y|=|y'-y|.$$ 
From now on we denote $\alpha_j(0,y,0,\pm \hat Y,0)$ by $\alpha^\pm_j$ and $\frac{X-\alpha^\pm_j |Y|^2}{|Y|}$ by $S^\pm_j$, so $\frac{X+\alpha^\pm_j|Y|^2}{|Y|}=S^\pm_j+2\alpha^\pm_j |Y|$. On the other hand,
\begin{equation}\label{xi prime}
\xi'=\xi+t \dot\xi +O(t^2)=\xi+x |Y|\dot\xi +O(x^2). 
\end{equation}

Notice that in the scattering coordinates, $\chi_j$, $j=1,\cdots, d$, are smooth down to $x=0$. Combining the definition of $N^\digamma$, \eqref{lambda omega}, \eqref{lambda omega prime} and \eqref{xi prime}, one can easily calculate the Schwartz kernel of $N^\digamma$. 
In particular, we are interested in the behavior of the kernel at the scattering front face $x=0$. 

\begin{lemma}\label{kernel at x=0}
The Schwartz kernel of $N_j^\digamma$, denoted by $K_j^\digamma$, at the scattering front face $x=0$ has the following expression (in the scattering bases)
\begin{align*}
K_j^\digamma & (y,X,Y)=\\
& e^{-\digamma X}|Y|^{-n+1}\sum_{\diamond=+,-}\chi_j(S^\diamond_j)\begin{pmatrix} S^\diamond_j \\[.5em] \hat Y\end{pmatrix}\begin{pmatrix} W^* W & W^* Q\end{pmatrix} (0,y,0,\hat Y,{\bf e}_j)\begin{pmatrix} S^\diamond_j+2\alpha_j^\diamond|Y| & \hat Y\end{pmatrix}\,{\bf e}_{j}.
\end{align*}
\end{lemma}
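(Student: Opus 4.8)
The plan is to compute the Schwartz kernel of $N_j^\digamma$ directly from its definition and then evaluate the leading behaviour at $x=0$. First I would write out $N^\digamma_j[f,\beta](x,y)$ by substituting the definition of $I_w$, giving a double integral over $(\lambda,\omega)$ and the flow parameter $t$ of the product of the weight factors $e^{-\digamma/x}g_{sc}(\lambda\p_x+\omega\p_y)W^*\chi_j(\lambda/x)$ with $e^{\digamma/x'}(W(\Phi(t))f(x',y',\lambda',\omega')+Q(\Phi(t))\beta(x',y',\lambda',\omega'))\cdot\xi'(t)$, where primes denote the values along the YM-geodesic $\gamma_{x,y,\lambda,\omega,{\bf e}_j}$ at time $t$. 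The conjugation produces the factor $e^{\digamma(1/x'-1/x)} = e^{-\digamma X + O(x)}$ once we pass to scattering coordinates $X=(x'-x)/x^2$, $Y=(y'-y)/x$; note both $\Gamma_+$ (the $t\geq 0$ branch) and $\Gamma_-$ (the $t\leq 0$ branch) contribute, which is the source of the sum over $\diamond=\pm$.

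Next I would change variables from $(t,\lambda,\omega)$ to $(X,Y)$ using the Jacobian identity $dt\,d\lambda\,d\omega = x^2|Y|^{1-n}J\,dX\,dY$ with $J|_{x=0}=1$, record that $f,\beta$ are evaluated at $(x',y',\lambda',\omega')$, and read off the Schwartz kernel $K_j^\digamma(x,y,X,Y)$ as the coefficient (in the appropriate scattering bases) of the pairing against $(f,\beta)$ at the point $z'$ corresponding to $(X,Y)$. The two vector/covector structures come from: (i) $g_{sc}(\lambda\p_x+\omega\p_y)$, which by the asymptotic expansion \eqref{lambda omega} contributes, at $x=0$, the covector $|Y|^{-1}\begin{pmatrix}X\mp\alpha_j^\diamond|Y|^2\\ \pm\hat Y\end{pmatrix}$ in the scattering cotangent basis, i.e. $\begin{pmatrix}S_j^\diamond \\ \hat Y\end{pmatrix}$ after absorbing signs (using that $\chi_j$ is even in $\lambda$, so the $\pm$ sign on $\hat Y$ can be normalized away and the argument of $\chi_j$ becomes $S_j^\diamond$); and (ii) the directional factors $(\lambda'\p_x+\omega'\p_y)$ hidden in how $f,\beta$ act as $1$-forms along $\dot\gamma$, which by \eqref{lambda omega prime} give at $x=0$ the row vector $\begin{pmatrix}S_j^\diamond + 2\alpha_j^\diamond|Y| & \hat Y\end{pmatrix}$. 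The weight product collapses to $W^*W$ and $W^*Q$ evaluated at $(0,y,0,\hat Y,{\bf e}_j)$ since $\lambda,\lambda'\to 0$, $x,x'\to 0$ and $\xi,\xi'\to{\bf e}_j$ there (using \eqref{xi prime}); the $\cdot\,\xi'$ pairing becomes the trailing $\cdot\,{\bf e}_j$. Finally the factor $x^2|Y|^{1-n}$ from the Jacobian combines with an $x^{-2}$ coming from converting $g_{sc}$ to the scattering cotangent frame (the $x^{-1}$ prefactors in \eqref{lambda omega} and the frame normalization), leaving $|Y|^{-n+1}$.

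I would organize the write-up by first doing the computation on the $\Gamma_+$ branch in scattering coordinates, carefully tracking which arguments of $W,Q,\chi_j,\alpha_j$ survive the limit $x\to 0$, then noting the $\Gamma_-$ branch is identical with the sign changes that \eqref{lambda omega}--\eqref{lambda omega prime} already encode and that get normalized by evenness of $\chi_j$, and summing. The main obstacle I anticipate is bookkeeping rather than anything deep: correctly matching the scattering tangent/cotangent frame normalizations against the Jacobian so that exactly $|Y|^{-n+1}$ (and not some other power of $x$ or $|Y|$) survives, and making sure the two matrix factors $\begin{pmatrix}S_j^\diamond\\ \hat Y\end{pmatrix}$ and $\begin{pmatrix}S_j^\diamond+2\alpha_j^\diamond|Y| & \hat Y\end{pmatrix}$ are assigned to the correct sides (the former from the outer weight $g_{sc}(\lambda\p_x+\omega\p_y)$, evaluated via \eqref{lambda omega}; the latter from the inner evaluation of the $1$-form on $\dot\gamma$, via \eqref{lambda omega prime}). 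Since the expansions \eqref{lambda omega}, \eqref{lambda omega prime}, \eqref{xi prime} and the Jacobian formula are already established (quoting \cite{UV16,SUV14}), this reduces to substitution and taking the $x\to 0$ limit, so the proof should be short.
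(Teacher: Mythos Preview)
Your proposal is correct and follows essentially the same route as the paper, which does not give a detailed proof but simply states that the kernel is obtained by combining the definition of $N_j^\digamma$ with the expansions \eqref{lambda omega}, \eqref{lambda omega prime}, \eqref{xi prime} and the Jacobian formula; your write-up fleshes out exactly these substitutions and the $x\to 0$ limit, including the correct handling of the $\Gamma_\pm$ branches via the evenness of $\chi_j$ and the cancellation of the $x^{\pm 2}$ factors.
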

For the sake of simplicity, we have used matrix block notations in the lemma, see also \cite{SUV14} for the case of unweighted ray transforms. In particular,
\begin{equation*}
\begin{split}
\begin{pmatrix} S+2\alpha |Y| & \hat Y\end{pmatrix}\quad &\mbox{stands for} \quad (S+2\alpha |Y|)(x^2\p_x)+\hat Y\cdot (x\p_y),\\
\begin{pmatrix} S \\[.5em] \hat Y\end{pmatrix}\quad &\mbox{stands for} \quad S \frac{dx}{x^2}+\hat Y\cdot \frac{dy}{x},
\end{split}
\end{equation*}
both terms are applied to the entries of $f$ and $\beta$ (they are indeed scalar). This says that the Schwartz kernel of $N^\digamma$ is a matrix valued distribution, whose entries are 1-1 tensors.


We decompose the operator $N^\digamma$ into two parts, so that 
$$N^\digamma [f,\beta]=N^1 f+ N^2 \beta.$$
Using Lemma \ref{kernel at x=0} it is easy to check that column vectors for the Schwartz kernels of $N^1$ and $N^2$ at the scattering front face $x=0$ are
\begin{equation}\label{kernel of N^1}
 e^{-\digamma X}|Y|^{-n+1}  \sum_{\diamond=+,-}\chi_j(S^\diamond_j) \begin{pmatrix} S^\diamond_j \\[.5em] \hat Y\end{pmatrix} W^* W(0,y,0,\hat Y,{\bf e}_j)\begin{pmatrix} S^\diamond_j+2\alpha_j^\diamond|Y| & \hat Y\end{pmatrix} {\bf e}_j
\end{equation}
and 
\begin{equation}\label{kernel of N^2}
 e^{-\digamma X}|Y|^{-n+1}  \sum_{\diamond=+,-}\chi_j(S^\diamond_j) \begin{pmatrix} S^\diamond_j \\[.5em] \hat Y\end{pmatrix} W^* Q(0,y,0,\hat Y,{\bf e}_j)\begin{pmatrix} S^\diamond_j+2\alpha_j^\diamond|Y| & \hat Y\end{pmatrix} {\bf e}_j
\end{equation}
respectively. In particular, $N^1$ and $N^2$, therefore $N^\digamma$, are scattering pseudodifferential operators of order $(-1,0)$ in $\overline\Omega$ in the sense of Melrose's scattering calculus \cite{Me94}; the set of such operators is denoted by $\Psi_{sc}^{-1,0}(\overline\Omega)$ or simply $\Psi_{sc}^{-1,0}$ (see \cite[Section2]{UV16} for more background). Generally the Schwartz kernel of a scattering pseudodifferential operator has the form $x^\ell K$ with non-zero $K$ smooth in $(x,y)$ down to $x=0$. For our case, the zero in the superscript of $\Psi^{-1,0}_{sc}$ means exactly that $\ell=0$, while the number $-1$, related to $K$, is simply the order as in the case of standard pseudodifferential operators.

\subsection{Ellipticity} In this subsection we focus on the ellipticity properties of the operator $N^{1}$.
In order to obtain ellipticity we need to narrow a bit the domain of $N^1$.  The previous analysis
was done assuming that $f$ was a $(n\times d)$-matrix valued 1-form; explicitly
\[f=(f^{\alpha}_{ij}dz^{j})_{n\times d}.\]
However, for our concrete problem we have additional structure, namely $f_{ij}=-f_{ji}$, so that $f$ really arises from a $\lg$-valued 2-form. On the other hand there is no reason for $N^1f$ to originate from
a $\lg$-valued 2-form, so to obtain an operator mapping between sections of the same vector bundle, we just
antisymmetrize $N^1f$. This additional algebraic operation is harmless for our purposes as it amounts to composing with a pseudodifferential operator of order zero and hence it will be implicitly assumed in the sequel, but see Remark \ref{remark:anti} below.
Let $^{sc}\Lambda^{2}(\bar{\Omega},\lg)$ denote the bundle of scattering 2-forms in $\overline{\Omega}$ with values in $\lg$.

\begin{prop}\label{N^1 elliptic}
There exist $\chi_j\in C^\infty_c(\mathbb R)$ with $\chi_j(0)>0$, $\chi_j\geq 0$, $j=1,\cdots,d$, such that $N^1\in \Psi^{-1,0}_{sc}(\overline{\Omega}, ^{sc}\Lambda^{2}(\overline{\Omega},\lg),^{sc}\Lambda^{2}(\overline{\Omega},\lg))$ is elliptic acting on Lie algebra valued 2-forms.
\end{prop}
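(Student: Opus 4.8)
The plan is to compute the principal symbol of $N^1$ at the scattering front face $x=0$ directly from the kernel formula \eqref{kernel of N^1} and to show that, once restricted to $\lg$-valued $2$-forms and antisymmetrized, it is a positive-definite endomorphism of the fibres of ${}^{sc}\Lambda^2(\overline\Omega,\lg)$ for a suitable choice of the cut-offs $\chi_j$. The operator $N^1$ is already known to lie in $\Psi^{-1,0}_{sc}$, so the principal symbol in the scattering sense is obtained by taking the fibre-wise Fourier transform of the front-face kernel \eqref{kernel of N^1} in the $(X,Y)$ variables; the computation is formally identical to the one in \cite{UV16,SUV14} except for the extra matrix factors $W^*W(0,y,0,\hat Y,{\bf e}_j)$ and the doubling over the Lie-algebra basis index $j$. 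As in \cite{UV16}, the $X$-Fourier transform together with the exponential weight $e^{-\digamma X}$ produces, after choosing $\digamma$ large (or equivalently working with the shifted dual variable), a Gaussian-type factor that makes the resulting symbol manifestly real and the $\hat Y$-integral converges; the upshot is that the symbol at a covector $\eta\in{}^{sc}T^*_{(0,y)}\overline\Omega$ is (up to a positive scalar) an integral over the unit sphere $\mathbb S^{n-2}$ in $\hat Y$ and a sum over $j$ of rank-one-in-the-tensor-slot terms
\[
\sum_{j=1}^d\int_{\mathbb S^{n-2}}\chi_j(0)\,\kappa(\eta,\hat Y)\;\Pi_{\hat Y}\,W^*W(0,y,0,\hat Y,{\bf e}_j)\,\Pi_{\hat Y}^{\!\top}\;d\hat Y,
\]
where $\Pi_{\hat Y}$ denotes the "evaluate the $2$-form/$1$-form on the light-cone direction determined by $\hat Y$ and $\eta$" map and $\kappa>0$ is the scalar coming from the $X$-integration; the restriction to the plane $\eta\cdot$(light-cone direction)$=0$, familiar from the X-ray transform, is what reduces the tensorial slot to the relevant codimension.

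The key structural point — and the reason the hypothesis that $\mathcal O$ contains a basis of $\lg$ enters — is that $W$ is invertible near $S_p\partial M\times\mathcal O$ (established in Section \ref{section:local}, since $\partial\tilde\Theta/\partial v=\mathrm{Id}$ there), so $W^*W(0,y,0,\hat Y,{\bf e}_j)$ is a \emph{positive-definite} $n\times n$ matrix for each $j$ and each $\hat Y$. Consequently each summand is a nonnegative endomorphism of the fibre, and pairing the symbol applied to a $2$-form $f$ with $f$ itself gives a sum of integrals of $\langle W^*W\,(\iota_{\cdot}f),\iota_{\cdot}f\rangle$-type quantities over $\hat Y\in\mathbb S^{n-2}$; positive-definiteness of $W^*W$ reduces the vanishing of this pairing to the vanishing of the corresponding \emph{unweighted}, scalar-slot symbol, which is exactly the one treated in \cite{SUV14} (their Lemma on ellipticity of the normal operator of the X-ray transform on $2$-tensors/$2$-forms). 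At that point one invokes the dimension hypothesis $n\ge 3$ — so that $\mathbb S^{n-2}$ has positive dimension and the cone of admissible light-cone directions is large enough — together with the antisymmetrization, to conclude that $f=0$; the Lie-algebra index $j$ ranges over a basis, so no information is lost along the $\lg$ factor and the conclusion is fibre-wise on ${}^{sc}\Lambda^2(\overline\Omega,\lg)$. The cut-offs $\chi_j$ are chosen with $\chi_j(0)>0$, $\chi_j\ge0$ and support so small that the asymptotic expansions \eqref{lambda omega}, \eqref{lambda omega prime} are valid and the sign of $\kappa$ is controlled; this is exactly the mechanism of \cite[Section 2.5]{UV16}, where smallness of $c$ (equivalently, of the support of $\chi_j$) is what buys ellipticity.

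The main obstacle I anticipate is bookkeeping rather than conceptual: one must check that the extra weight matrices $W^*W$ and the summation over the basis $\{{\bf e}_j\}$ do not spoil the positivity argument — i.e. that after antisymmetrization the "cross terms" between different light-cone branches $\diamond=\pm$ and between different $j$ do not produce indefinite contributions. This is handled exactly as in \cite[Section 2.5]{UV16} by noting that the two branches $\diamond=+$ and $\diamond=-$ contribute the \emph{same} sign (the kernel \eqref{kernel of N^1} is a sum of two terms each of the form $\mathrm{(column)}\cdot\mathrm{(positive\ matrix)}\cdot\mathrm{(row)}$), and by evaluating at the front face where the factor $2\alpha_j^\diamond|Y|$ is lower order and hence does not affect the leading symbol. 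A secondary technical point is that $N^1$ as originally defined need not produce a $2$-form, which is why we pre/post-compose with the antisymmetrization projector; since that projector is an order-zero scattering $\Psi$DO with scalar (hence positive on the antisymmetric subbundle) principal symbol, ellipticity of the composition on ${}^{sc}\Lambda^2(\overline\Omega,\lg)$ follows from ellipticity of $N^1$ on the full matrix-valued $1$-forms restricted to that subbundle, and this is recorded as Remark \ref{remark:anti}.
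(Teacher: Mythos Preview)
Your outline captures the paper's strategy --- compute the symbol from the front-face kernel, use positive-definiteness of $W^*W$ to strip the weight, then invoke the $2$-form (antisymmetry) structure together with $n\geq 3$ --- but it conflates the two regimes that must be treated separately in the scattering calculus: \emph{fibre infinity} and \emph{finite points} of ${}^{sc}T^*\overline\Omega$.

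At fibre infinity your description is essentially correct: the term $2\alpha_j^\diamond|Y|$ is lower order, the symbol localises to $\rho^\perp\cap(\mathbb R\times\mathbb S^{n-2})$, and the pairing argument you sketch goes through. But at finite points --- in particular near $\rho=0$, where ellipticity in the scattering sense is a \emph{separate} requirement --- the term $2\alpha_j^\diamond|Y|$ is \emph{not} lower order, there is no restriction to a hyperplane, and the $X$-Fourier transform of $e^{-\digamma X}$ merely shifts $\zeta\mapsto\zeta+i\digamma$; it does not by itself produce any Gaussian. The paper handles this by first taking $\chi_j(s)=e^{-s^2/(2\digamma^{-1}\alpha_j)}$ (Gaussian, hence \emph{not} compactly supported), so that the $(X,Y)$-Fourier transform can be evaluated in closed form. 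The boundary symbol then pairs with $\varpi$ as a sum of full-sphere integrals
\[
\int_{\mathbb S^{n-2}}\Big|W(0,y,0,\hat Y,{\bf e}_j)\big(\varpi_y\cdot\hat Y-\kappa(\hat Y\cdot\eta)\,\varpi_x\big)\cdot{\bf e}_j\Big|^2 e^{-c_j|\hat Y\cdot\eta|^2}\,d\hat Y,\qquad \kappa=\frac{\zeta-i\digamma}{\zeta^2+\digamma^2},
\]
whose vanishing forces each $(\varpi_{ix}^\alpha,\varpi_{iy}^\alpha)$ to be parallel to the \emph{complex} covector $\mu=(\zeta+i\digamma,\eta)$; the $2$-form argument must then be run with complex conjugates (test against $v=\overline\mu$ and $v=\overline{\mu+u^\alpha}$) to conclude $\varpi=0$. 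Only afterwards does one approximate the Gaussian $\chi_j$ by $\chi_j^k(s)=\phi(s/k)\chi_j(s)\in C^\infty_c$ and pass to the limit in the symbol. Your proposal misses this mechanism: the Gaussian is a \emph{choice of} $\chi_j$, not a consequence of $e^{-\digamma X}$, and ``small support of $\chi_j$ with $\chi_j(0)>0$'' alone does not deliver the finite-point ellipticity.
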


\begin{proof}
We are interested in the asymptotic behaviour, which is uniform in $c$, of the principal symbol of $N^1$ on $(z,\rho)\in\, ^{sc}T^*\overline\Omega$ for $z=(x,y)$, $x\geq 0$ small and $\rho=(\zeta,\eta)$. Note that $\overline\Omega$ is a manifold with boundary, hence there are two types of behaviours, i.e. at finite points and fibre infinity of $^{sc}T^*\overline\Omega$. Since the Schwartz kernel of $N^1$ is smooth in $(x,y)$ up to the front face $x=0$ and the Lie algebra valued 2-form is supported in $x<c$ for $c$ sufficiently close to $0$, it is enough to consider the symbol at $x=0$.

We start with the case of fibre infinity of the scattering cotangent bundle. Let $\chi_j\in C^\infty_c(\mathbb R)$ satisfy $\chi_j\geq 0$ and $\chi_j>0$ near $0$. 
One may ignore the extra decay factor $|Y|$ in the Schwartz kernel. 
So let $\tilde S=X/|Y|$, $|\rho|$ sufficiently large; by \eqref{kernel of N^1} each column vector of the principal symbol $\sigma_p(N^1)$ at $\rho=(\zeta,\eta)$ has the form (up to some positive constant)
\begin{align*}
|\rho|^{-1} \Big(\int_{\rho^\perp\cap (\mathbb R\times \mathbb S^{n-2})} \chi_j(\tilde S)\begin{pmatrix}\tilde S \\[.5em] \hat Y\end{pmatrix}(W^*W)(0,y,0,\hat Y,{\bf e}_j)\begin{pmatrix} \tilde S & \hat Y\end{pmatrix} \,d\tilde S d\hat Y \Big) {\bf e}_j.
\end{align*}
Here $\rho^\perp$ is the orthogonal complement of $\rho$. Then
\begin{align*}
\Big(\sigma_p(N^1)& (\rho)\varpi,\varpi\Big)=\\
& |\rho|^{-1}\sum_{j=1}^d \int_{\rho^\perp\cap (\mathbb R\times \mathbb S^{n-2})} \chi_j(\tilde S)\left\vert W(0,y,0,\hat Y,{\bf e}_j)( \varpi_{x} \tilde S+ \varpi_{y}\cdot \hat Y)\cdot {\bf e}_j\right\vert ^2\,d\tilde S d\hat Y.
\end{align*}
Now if $\Big(\sigma_p(N^1) (\rho)\varpi,\varpi\Big)=0$,
we get that 
\begin{equation}\label{zero integrand 1}
W(0,y,0,\hat Y,{\bf e}_j)( \varpi_{x} \tilde S+ \varpi_{y}\cdot \hat Y)\cdot {\bf e}_j=0,\quad j=1,\cdots d,
\end{equation} for $\tilde S$ small and $\tilde S \zeta+\hat Y\cdot \eta=0$. Note that $\varpi$ is a Lie algebra valued 2-form that can be written as a matrix valued 1-form:
$$\varpi=(\varpi_x,\, \varpi_y)\quad \mbox{with}\quad \varpi_x=(\varpi^\alpha_{ix})_{n\times d},\quad  \varpi_y=(\varpi^\alpha_{iy})_{n\times d}.$$
Since $W(0,y,0,\hat Y,{\bf e}_j)$ is invertible for any $j=1,\cdots,d$, and $\{{\bf e}_1,\cdots,{\bf e}_d\}$ form a basis, \eqref{zero integrand 1} implies that
\begin{equation*}
\varpi_x \tilde S+\varpi_y\cdot \hat Y =0
\end{equation*}
for any $(\tilde S,\hat Y)\in \rho^\perp\cap (\mathbb R\times \mathbb S^{n-2})$ with $\tilde S$ close to zero. Notice that in dimension $\geq 3$, the set of such $(\tilde S,\hat Y)$ (i.e. small $|\tilde S|$) always spans $\rho^\perp$, which means 
that $(\varpi^\alpha_{ix}, \varpi^\alpha_{iy})$ is parallel to $\rho$ for any $\alpha$ and $i=x,y^1,\cdots,y^{n-1}$, i.e. there exist $u^\alpha=(u^\alpha_1,\cdots,u^\alpha_n)$ such that $(\varpi^\alpha_{ix},\varpi^\alpha_{iy})=u^\alpha_i \rho$ (this is where the dimension assumption takes effect; when dim $M=2$, the set of $(\tilde S,\hat Y)$ satisfying the requirements could be empty if $|\zeta|$ is relatively small comparing to $|\eta|$, for $\rho=(\zeta,\eta)$). 


As a Lie algebra valued 2-form, $\varpi(v,v)=0$ for any vector $v$. Thus $(\rho\cdot v)(u^\alpha\cdot v)=0$, $\forall v,\,\alpha$. Let $v=\rho$, as $\rho\neq 0$, we get that $u^\alpha\cdot \rho=0$, i.e. $u^\alpha\in\rho^\perp$. On the other hand, if $v=\rho+u^\alpha$, then $|u^\alpha|^2=0$, i.e. $u^\alpha=0$. This shows that $\varpi=0$, so $N^1$ is elliptic at fibre infinity.



To analyze the (scattering) principal symbol of $N^1$ at finite points, in particular for $\rho$ close to $0$, of $^{sc}T^*\overline\Omega$, we take the $(X,Y)$-Fourier transform of \eqref{kernel of N^1}. Following the strategy in \cite{UV16}, we first calculate the case when $\chi_j$, $j=1\cdots,d$ are of Gaussian type, so let $\chi_j(s)=e^{-s^2/(2\digamma^{-1}\alpha_j)}$. 
A simple calculation gives that each column vector of the scattering principal symbol $\sigma_{sc}(N^1)$ at $\rho=(\zeta,\eta)$ has the form 
\begin{align*}
\frac{C}{\sqrt{\zeta^2+\digamma^2}}\times \int_{\mathbb S^{n-2}}\begin{pmatrix} -\overline{\kappa} \hat Y\cdot \eta \\[.5em] \hat Y\end{pmatrix}(W^*W)(0,y,0,\hat Y,{\bf e}_j)\begin{pmatrix} -\kappa \hat Y\cdot \eta & \<\hat Y,\cdot\>\end{pmatrix}e^{-\frac{|\hat Y\cdot \eta|^2}{2\digamma^{-1}(\zeta^2+\digamma^2)\alpha_j}}\,d\hat Y \, {\bf e}_j,
\end{align*} 
where $\kappa=\frac{\zeta-i\digamma}{\zeta^2+\digamma^2}$.

Consider 
\begin{align*}
\Big(&\sigma_{sc}(N^1) (\zeta,\eta)\varpi,\varpi\Big)=\\
& \frac{C}{\sqrt{\zeta^2+\digamma^2}}\sum_{j=1}^d \int_{\mathbb S^{n-2}} \left\vert W(0,y,0,\hat Y,{\bf e}_j) \Big(\varpi_{y}\cdot \hat Y-\varpi_{x} \frac{\zeta-i\digamma}{\zeta^2+\digamma^2}\hat Y\cdot \eta\Big) \cdot {\bf e}_j\right\vert ^2 e^{-\frac{|\hat Y\cdot \eta|^2}{2\digamma^{-1}(\zeta^2+\digamma^2)\alpha_j}}\,d\hat Y.
\end{align*}
If $\Big(\sigma_{sc}(N^1) (\zeta,\eta)\varpi,\varpi\Big)=0$, then similar to the case at fibre infinity we have that
\begin{equation*}
\varpi_{y}\cdot \hat Y-\varpi_{x} \frac{\zeta-i\digamma}{\zeta^2+\digamma^2}\hat Y\cdot \eta=0,\quad \forall \, \hat Y\in \mathbb S^{n-2}.
\end{equation*}
This implies that $(\varpi^\alpha_{ix},\varpi^\alpha_{iy})$ is parallel to $\mu=(\zeta+i\digamma,\eta)$, $\forall i,\,\alpha$. Now use the same idea as in the fibre infinity case ($\varpi$ is a 2-form) to get that there exists a vector $u^\alpha$ such that $(u^\alpha\cdot v)(\mu\cdot v)=0$, $\forall v,\,\alpha$. Let $v=\overline\mu$, notice that $\digamma>0$, we have $u^\alpha\cdot\overline\mu=0$. Next, take $v=\overline{\mu+u^\alpha}$, we achieve that $|u^\alpha|^2=0$, i.e. $u^\alpha=0$. Therefore $\varpi=0$ and $N^1$ is elliptic at finite points for Gaussian type $\chi_j$.

Notice that a Gaussian type $\chi_j$ is not compactly supported. Nevertheless one can take $\chi_j^k(s)=\phi(s/k)\chi_j(s)$ for some $\phi\in C^\infty_c(\mathbb R)$ with $\phi\geq 0$ and $\phi(0)>0$. For each $j$, the Schwartz kernel of $N^1_j(\chi^k_j)$ converges to the one of $N^1_j(\chi_j)$ in the space of distributions as $k\to \infty$, hence we also have convergence for the principal symbols. Since the dimension $d$ of the Lie algebra is finite, for large enough $k$, $N^1=(N^1_1(\chi^k_1),\cdots,N^1_d(\chi^k_d))$ is elliptic too.

Combining the two cases, we establish the ellipticity of $N^1$ acting on 2-forms for properly chosen $\chi_j$, $j=1,\cdots,d$.
\end{proof}



\begin{rem} \label{remark:anti} As we pointed out just before stating Proposition \ref{N^1 elliptic}, we need to antisymmetrize $N^{1}$ in order to make it map the set of antisymmetric matrix-valued 1-forms to itself. Strictly speaking, if $A$ denotes the antisymmetrization operator, we are required to prove that $(\sigma(A\circ N^{1}) \varpi,\varpi)\neq 0$ in order to establish ellipticity. However, since the inner product that we are working with pairs symmetric and antisymmetric matrices to zero, it is enough to show that  $(\sigma(N^{1}) \varpi,\varpi)\neq 0$ as done above.

\end{rem}

Now we move back to the operator $N^\digamma$. We denote $H_{sc}^{s,r}$ the {\it scattering Sobolev spaces}, which are locally equivalent to the standard weighted Sobolev spaces $H^{s,r}(\mathbb R^n)=\<z\>^{-r}H^s(\mathbb R^n)$, see \cite[Section 2]{UV16} for details. By Proposition \ref{N^1 elliptic} and the local nature of the problem (the error term in the elliptic estimate, which is proportional to $c$, can be absorbed), as in \cite{UV16}, we have the following corollary.

\begin{cor}\label{elliptic estimate}
For $c>0$ sufficiently small and properly chosen $\chi_j\in C^{\infty}_c(\mathbb R)$, $j=1,\cdots d$, given any pair of matrix valued 1-forms $[f,\beta]$, where the dimensions of $f$ and $\beta$ are $n\times d$ and $d\times d$ respectively, with $f$ induced by a Lie algebra valued 2-form, then
$$\|f\|_{H^{s,r}_{sc}(\overline\Omega)}\leq C\big(\|N^\digamma[f,\beta]\|_{H^{s+1,r}_{sc}(\overline\Omega)}+\|\beta\|_{H^{s,r}_{sc}(\overline\Omega)}\big).$$
\end{cor}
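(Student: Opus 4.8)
The plan is to deduce the estimate directly from the ellipticity of $N^1$ (Proposition~\ref{N^1 elliptic}) via the standard elliptic parametrix construction in Melrose's scattering calculus, with a little extra care about the dependence on the small parameter $c$. First I would use the splitting $N^\digamma[f,\beta]=N^1 f+N^2\beta$ from the previous subsection, where $N^1,N^2\in\Psi_{sc}^{-1,0}(\overline\Omega)$. Since every element of $\Psi_{sc}^{-1,0}$ maps $H^{s,r}_{sc}(\overline\Omega)\to H^{s+1,r}_{sc}(\overline\Omega)$ boundedly, the term $N^2\beta$ will contribute exactly the harmless summand $\|N^2\beta\|_{H^{s+1,r}_{sc}}\le C\|\beta\|_{H^{s,r}_{sc}}$. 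Composing $N^1$ with the antisymmetrization of Remark~\ref{remark:anti} (an operator of order $(0,0)$ that changes neither the symbol class nor, by that remark, the ellipticity) we may regard $N^1$ as acting on scattering $\lg$-valued $2$-forms, where Proposition~\ref{N^1 elliptic} applies.

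Next I would build the scattering parametrix. Proposition~\ref{N^1 elliptic} gives ellipticity of $N^1$ on $^{sc}\Lambda^2(\overline\Omega,\lg)$ at the front face $x=0$; by continuity of the (scattering) principal symbol it persists on $\{0\le x\le c\}$ once $c$ is small, which is all that is needed since $[f,\beta]$ is supported in $\{x<c\}$. The usual construction then yields $G\in\Psi_{sc}^{1,0}(\overline\Omega)$, microsupported near $x=0$, with $GN^1=\Id+R$ on $2$-forms supported in $\{x<c\}$, the error $R$ being of strictly lower order and weight, so that the parametrix estimate reads
\[
\|f\|_{H^{s,r}_{sc}(\overline\Omega)}\ \le\ C\|N^1 f\|_{H^{s+1,r}_{sc}(\overline\Omega)}+C\|f\|_{H^{s-1,r-1}_{sc}(\overline\Omega)}.
\]
Writing $N^1 f=N^\digamma[f,\beta]-N^2\beta$ and using the boundedness of $N^2$ recorded above gives $\|N^1f\|_{H^{s+1,r}_{sc}}\le\|N^\digamma[f,\beta]\|_{H^{s+1,r}_{sc}}+C\|\beta\|_{H^{s,r}_{sc}}$, so only the error term $\|f\|_{H^{s-1,r-1}_{sc}}$ remains to be removed.

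The key point — and the step that requires the same care as in \cite[Section~2.5]{UV16} — is that this error is absorbable. Because $f$ is supported in the thin region $\{x<c\}$, on which the scattering weight satisfies $\langle z\rangle\gtrsim c^{-1}$, one has
\[
\|f\|_{H^{s-1,r-1}_{sc}(\overline\Omega)}\ \le\ \|f\|_{H^{s,r-1}_{sc}(\overline\Omega)}\ =\ \big\|\langle z\rangle^{-1}\langle z\rangle^{r}f\big\|_{H^s}\ \le\ Cc\,\|f\|_{H^{s,r}_{sc}(\overline\Omega)},
\]
since $\langle z\rangle^{-1}$ (cut off to $\{x<2c\}$) is a Sobolev multiplier of norm $O(c)$ there. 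Choosing $c$ small enough and $\chi_j$ as in Proposition~\ref{N^1 elliptic}, this term is absorbed into the left-hand side, and combining the inequalities yields the stated estimate. I expect the only genuine work to be verifying that the constants in the parametrix estimate and in the multiplier bound are uniform in $c$ (equivalently, carrying the $c$-dependence through the construction), which is precisely the bookkeeping performed in \cite[Section~2.5]{UV16} and which I would follow closely.
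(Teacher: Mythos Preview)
Your proposal is correct and follows exactly the approach the paper indicates: the paper gives no detailed argument here, merely stating that the corollary follows from Proposition~\ref{N^1 elliptic} ``and the local nature of the problem (the error term in the elliptic estimate, which is proportional to $c$, can be absorbed), as in \cite{UV16}.'' Your write-up is a faithful and accurate unpacking of precisely that sentence --- the splitting $N^\digamma=N^1+N^2$, the scattering parametrix for $N^1$, and the absorption of the lower-order remainder via the small-$c$ support constraint are all implicit in the paper's one-line justification and in the cited reference.
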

(Note that the definition of the neighborhood $\Omega$ depends on the small parameter $c>0$.)

In Corollary \ref{elliptic estimate} and up to now, $f$ and $\beta$ are independent of each other. However, to achieve the main result of this section, the injectivity of $I_w$, we need to use the relation between the connection $A$ and its curvature $F$. This will be the subject of the next (and final) subsection and we shall exploit this relationship in the {\it normal gauge}.

\subsection{The normal gauge and the proof of local injectivity}

Before continuing the argument, we do gauge transforms of $A$ and $\tilde A$ in the $(x,y)$ coordinates. Observing that $A=A_xdx+A_y dy$ as a 1-form, let $\Omega'$ be some open set (still in the $(x,y)$ coordinate system) with $\overline\Omega\subset \Omega'$, we construct $u:\Omega'\to G$ such that
$$\p_x u+A_x u=0,\quad u|_{\Omega' \cap \p M}=e,$$
and define $A'=u^{-1}du+u^{-1}A u$. Then it is easy to see that $A'_x=0$. Similarly, one constructs $\tilde u:\Omega'\to G$ for $\tilde A$ and $\tilde A'=\tilde u^{-1}d\tilde u+\tilde u^{-1} \tilde A \tilde u$ with $\tilde A'_x=0$. Since $A=\tilde A$ in $\Omega'\setminus M^o$ (we denote the interior of $M$ by $M^o$), we get that $u=\tilde u$ in the same set,  thus $A'=\tilde A'$ and $F'=\tilde F'$ in $\Omega'\setminus M^o$ accordingly. After doing the gauge transforms, we only need to consider $A$ and $\tilde A$ satisfying the {\it normal gauge condition} (i.e. $A_x=\tilde A_x=0$), and the resulting $[\F,\hat c\A]$ is still (locally near $p$) supported in $M$. 

Let us do a bit more analysis on the pair $[\F,\hat c\A]$ under the normal gauge. Since $A_x=\tilde A_x=0$, $\A_x=0$ too. On the other hand, $F=dA+A\wedge A$ as a 2-form has the following expression in local coordinates
\begin{equation*}
F_{xx}=0,\quad F_{xy}=\p_x A_y,\quad F_{yx}=-\p_x A_y,\quad F_{yy}=p(A_y,\p_y A_y),
\end{equation*}
for some function $p$. Thus for $\F=F-\tilde F$,
\begin{equation*}
\F_{xx}=0,\quad \F_{xy}=\p_x \A_y,\quad \F_{yx}=-\p_x \A_y,\quad \F_{yy}=p(A_y,\p_y A_y)-p(\tilde A_y,\p_y \tilde A_y).
\end{equation*}
We need the following estimate in the scattering Sobolev spaces. 

\begin{lemma}\label{stronger control}
Given any $\delta>0$, there exists $\varepsilon>0$ such that for $0<c<\varepsilon$, if $\A$ is supported in $\overline\Omega \cap M$, then
$$\|e^{-\digamma/x}\A\|_{H^{s,r}_{sc}(\overline\Omega)}\leq \delta \|e^{-\digamma/x}\F\|_{H^{s,r}_{sc}(\overline\Omega)}.$$
\end{lemma}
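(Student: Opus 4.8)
The plan is to exploit the normal-gauge relations $\F_{xy}=\p_x\A_y$ and $\A_x=0$ together with the fact that $\A$ vanishes on $\{x=0\}\supset\overline\Omega\cap\p M$ (since $A=\tilde A$, hence $u=\tilde u$, outside $M^o$, and the support of $\A$ lies in $M$, so $\A|_{x=0}=0$). The idea is that $\A_y$ is recovered from $\F_{xy}$ by integration in the $x$-variable starting from the artificial boundary, and because the region $\overline\Omega\cap M$ has $x$-extent at most $c$, this integration gains a factor proportional to $c$ in the relevant norm. Choosing $c$ small then gives the stated inequality with $\delta$.

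First I would set up the fundamental-theorem-of-calculus identity: for $z=(x,y)\in\overline\Omega\cap M$,
$$\A_y(x,y)=\int_{0}^{x}\p_{x'}\A_y(x',y)\,dx'=\int_{0}^{x}\F_{x'y}(x',y)\,dx',$$
using $\A_y|_{x=0}=0$. Conjugating by $e^{-\digamma/x}$ and noting that $e^{-\digamma/x}$ is monotone increasing in $x$ on $(0,\infty)$, so $e^{-\digamma/x}\le e^{-\digamma/x'}$ for $x'\le x$, I would bound the conjugated quantity pointwise by an $x'$-integral of $e^{-\digamma/x'}\F$. Then I would translate this into the scattering Sobolev spaces $H^{s,r}_{sc}(\overline\Omega)$: since these are locally equivalent to weighted ordinary Sobolev spaces $H^{s,r}(\R^n)$ (as recalled from \cite[Section~2]{UV16}), the operation "$x$-integral from $0$" is an operator whose norm on $H^{s,r}_{sc}$ over the slab $\{0\le x\le c\}$ is $O(c)$. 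The cleanest way is a Hardy-type / Minkowski integral inequality in the $x$-variable applied slice-by-slice in $y$, combined with boundedness of $\p_y$-commutators, all of which cost at most a constant times the $x$-width $c$. Given $\delta$, pick $\varepsilon$ so that this constant times $c$ is $<\delta$ for $c<\varepsilon$; this is the required $\varepsilon$.

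The main obstacle I anticipate is handling the derivatives of order up to $s$ (and the weight $\langle z\rangle^{-r}$) cleanly in the scattering calculus rather than just the $L^2$ case: one must check that taking $s$ derivatives of $\A_y=\int_0^x\F_{\cdot y}$ still produces a bound by $\|e^{-\digamma/x}\F\|_{H^{s,r}_{sc}}$ with a small constant. Tangential ($\p_y$) derivatives commute past the $x$-integral harmlessly, and each normal derivative $\p_x$ turns the primitive back into $\F$ itself (with no smallness, but also no integration), so one has to organize the estimate so that the smallness factor $c$ is extracted from at least one integration while the remaining derivatives are absorbed into the $H^{s,r}_{sc}$ norm of $\F$; this is a standard but slightly fussy interpolation/product-rule bookkeeping. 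For negative $s$ one argues by duality. I would also remark that the scattering-metric weights $x^{\pm 1}$ relating the "$dx$" and "$dy$" components are harmless here since they are bounded on $\{0\le x\le c\}$ up to the constant absorbed into $C$. Once this is in place, the inequality $\|e^{-\digamma/x}\A\|_{H^{s,r}_{sc}(\overline\Omega)}\le\delta\,\|e^{-\digamma/x}\F\|_{H^{s,r}_{sc}(\overline\Omega)}$ follows, noting that $\A=\A_y\,dy$ and $\F\supset\p_x\A_y$ as one of its components so that only the $\F_{xy}$ block is needed on the right-hand side.
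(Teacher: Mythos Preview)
There is a geometric error in your setup. After the shift by $c$, the hypersurface $\{x=0\}$ is the \emph{artificial} boundary lying in the interior of $M$, while $\p M$ near $p$ lies near $\{x=c\}$; thus $\{x=0\}$ does not contain $\overline\Omega\cap\p M$, and $\A$ has no reason to vanish on $\{x=0\}$. Consequently your identity $\A_y(x,y)=\int_0^x\F_{x'y}\,dx'$ is not valid. What \emph{is} true in the normal gauge is that $\A$ vanishes in $\Omega'\setminus M^o$, i.e.\ on the $\p M$ side of $O$, so the correct integration runs from the other end: $\A_y(x,y)=-\int_x^{\infty}\F_{x'y}\,dx'$. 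Relatedly, your monotonicity inequality is stated backwards: since $e^{-\digamma/x}$ is increasing, $x'\le x$ gives $e^{-\digamma/x'}\le e^{-\digamma/x}$, not the reverse; the bound you want, $e^{-\digamma/x}\le e^{-\digamma/x'}$, holds precisely when $x'\ge x$, again pointing to integration from the $\p M$ side. With these two corrections the pointwise argument can be made to work at the $L^2$ level.

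The paper, however, avoids your anticipated ``fussy bookkeeping'' for general $s,r$ by a different and cleaner mechanism. It observes that the conjugated scattering derivative $D^\digamma_x:=e^{-\digamma/x}(-ix^2\p_x)e^{\digamma/x}$ has scattering principal symbol $\zeta+i\digamma$ and is therefore elliptic; combined with injectivity on distributions supported in $\overline\Omega\cap M$ (which uses exactly the vanishing of $\A$ outside $M$, i.e.\ for large $x$) this yields
\[
\|e^{-\digamma/x}\A_y\|_{H^{s,r}_{sc}}\le C\,\|x^2 e^{-\digamma/x}\p_x\A_y\|_{H^{s,r}_{sc}}
\]
for all $s,r$ at once. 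The small constant then comes not from the length of an integration interval but from the pointwise bound $x^2\le c^2$ on the support, converting the factor $x^2$ (which is precisely what makes $x^2\p_x$ a scattering vector field) into $\varepsilon^2$. This sidesteps the interpolation and commutator issues entirely.
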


\begin{proof}
By a simple calculation,
$$-ix^2 e^{-\digamma/x}\p_x \A_y=D^\digamma_x e^{-\digamma/x} \A_y,$$
where $D^\digamma_x=e^{-\digamma/x}(-x^2 i\p_x) e^{\digamma/x}$ with principal symbol $\zeta+i\digamma$. In particular, one obtains the Fredholm property in the scattering Sobolev norms,
$$\|e^{-\digamma/x}\A_y\|_{H^{s,r}_{sc}(\overline\Omega)}\leq C (\|x^2 e^{-\digamma/x}\p_x \A_y\|_{H^{s,r}_{sc}(\overline\Omega)}+\|e^{-\digamma/x}\A_y\|_{H^{-K,-L}_{sc}(\overline\Omega)})$$
for any $s,r,K,L$. Moreover, since $D^\digamma_x$ has trivial kernel acting on distributions supported in $\overline \Omega \cap M$, the second term on the right hand side of the inequality can be dropped by a standard functional analysis argument, see e.g. \cite[Proposition V.3.1]{Ta81}. Notice that $\A$ is supported in $\{0\leq x\leq c\}$; we have 
$$\|e^{-\digamma/x}\A_y\|_{H^{s,r}_{sc}(\overline\Omega)}\leq C \varepsilon^2\|e^{-\digamma/x}\p_x \A_y\|_{H^{s,r}_{sc}(\overline\Omega)},$$
thus 
$$\|e^{-\digamma/x}\A\|_{H^{s,r}_{sc}(\overline\Omega)}\leq C \varepsilon^2\|e^{-\digamma/x}\F\|_{H^{s,r}_{sc}(\overline\Omega)}.$$
Taking $\varepsilon$ sufficiently small, this proves the lemma.
\end{proof}

Taking into account the exponential weights in the definition of $N^\digamma$, Corollary \ref{elliptic estimate} and Lemma \ref{stronger control}, we have that for $[\F,\hat c\A]$ supported in $\{x\leq c\}$, $c>0$ sufficiently small
\begin{equation*}
\|e^{-\digamma/x}\F\|_{H^{s,r}_{sc}(\overline\Omega)}\leq \frac{C}{1-C'\delta}\|N^\digamma e^{-\digamma/x}[\F,\hat c\A]\|_{H^{s+1,r}_{sc}(\overline\Omega)}.
\end{equation*}
In particular, this proves the local invertibility of $I_w$.

\begin{thm}\label{local invertibility}
Given potentials $A$ and $\tilde A$ in the normal gauge, if $I_w[\F,\hat c\A](z,v,\xi)=0$ for any $(z,v)$ close to $S_p\p M$ and $\xi\in \mathcal O$, then $\F$ and therefore $\A$, vanish near $p$ in $M$.
\end{thm}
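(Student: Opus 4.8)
The plan is to combine the elliptic estimate of Corollary \ref{elliptic estimate} with the a priori bound of Lemma \ref{stronger control}, exploiting that under the normal gauge the second row pair $[\F,\hat c\A]$ is not truly independent: its $\beta$-component $\hat c\A$ is controlled by $\F$ because $\F_{xy}=\p_x\A_y$ while $\A_x=0$. First I would observe that by hypothesis $I_w[\F,\hat c\A]=0$ for all $(z,v,\xi)$ with $(z,v)$ near $S_p\p M$ and $\xi\in\mathcal O$; since $N^\digamma$ is built by integrating $W^*$ against $I_w e^{\digamma/x}[\cdot]$ over $\lambda,\omega$ (and over the basis vectors ${\bf e}_j\in\mathcal O$), applying $N^\digamma$ to $e^{-\digamma/x}[\F,\hat c\A]$ gives $N^\digamma e^{-\digamma/x}[\F,\hat c\A]=0$. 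Here I use that the cut-off functions $\chi_j$ restrict $\lambda/x$ to a small set, but the extended setup of Section \ref{section:local} has made $\tau$, hence $W$ and $Q$, smooth on $S\Omega\times\mathcal O$, and $[\F,\hat c\A]$ remains supported in $M\subset\{x\le c\}$, so all integrals converge and the algebraic manipulations are legitimate.

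Next I would write out the chain of inequalities. By Corollary \ref{elliptic estimate} applied with $f=e^{-\digamma/x}\F$ and $\beta=e^{-\digamma/x}\hat c\A$ (noting $\F$ is induced by a $\lg$-valued $2$-form since $\F=F-\tilde F$ is antisymmetric, $\F_{ij}=-\F_{ji}$),
\begin{equation*}
\|e^{-\digamma/x}\F\|_{H^{s,r}_{sc}(\overline\Omega)}\le C\big(\|N^\digamma e^{-\digamma/x}[\F,\hat c\A]\|_{H^{s+1,r}_{sc}(\overline\Omega)}+\|e^{-\digamma/x}\hat c\A\|_{H^{s,r}_{sc}(\overline\Omega)}\big).
\end{equation*}
The first term on the right vanishes. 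For the second, Lemma \ref{stronger control} (the structure constants $\hat c$ are bounded, so $\|e^{-\digamma/x}\hat c\A\|\le C'\|e^{-\digamma/x}\A\|$) gives $\|e^{-\digamma/x}\hat c\A\|_{H^{s,r}_{sc}}\le C'\delta\|e^{-\digamma/x}\F\|_{H^{s,r}_{sc}}$ for $0<c<\varepsilon(\delta)$. Choosing $\delta$ small enough that $CC'\delta<1$ and $c<\varepsilon(\delta)$, the term can be absorbed into the left side, forcing $\|e^{-\digamma/x}\F\|_{H^{s,r}_{sc}(\overline\Omega)}=0$, hence $\F=0$ on $\overline\Omega$, i.e. near $p$ in $M$. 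Finally, $\A=0$ near $p$ follows because under the normal gauge $\A_x=0$ and $\p_x\A_y=\F_{xy}=0$, while $\A_y|_{\p M}=0$ by the boundary condition $u|_{\p M}=e$ (Remark \ref{WLOG} and the normal-gauge construction make $\iota^*\A=0$), so integrating in $x$ from the boundary gives $\A_y\equiv0$ near $p$.

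I expect the main subtlety to be justifying that the vanishing of $I_w[\F,\hat c\A]$ on the relevant open set of $(z,v,\xi)$ translates cleanly into $N^\digamma e^{-\digamma/x}[\F,\hat c\A]=0$ as an honest identity in $H^{s+1,r}_{sc}(\overline\Omega)$ rather than merely almost everywhere; this rests on the support properties arranged in Section \ref{section:local} — that $[\F,\hat c\A]$ is supported in $\{x<c\}$ away from the artificial boundary, that the YM-geodesics through points of $O$ with $\lambda/x$ in $\supp\chi_j$ stay in the region where the extended lens data (and hence the weights) are smooth, and that the conjugation by $e^{\pm\digamma/x}$ only produces exponentially small but finite factors on $\supp[\F,\hat c\A]$. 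Once that bookkeeping is in place, the rest is the short absorption argument above; the genuinely hard analytic work (ellipticity in the scattering calculus, the Fredholm-type bound for $D^\digamma_x$) has already been done in Proposition \ref{N^1 elliptic}, Corollary \ref{elliptic estimate} and Lemma \ref{stronger control}.
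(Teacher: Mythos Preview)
Your proposal is correct and follows essentially the same route as the paper: the paper derives the combined estimate
\[
\|e^{-\digamma/x}\F\|_{H^{s,r}_{sc}(\overline\Omega)}\leq \frac{C}{1-C'\delta}\|N^\digamma e^{-\digamma/x}[\F,\hat c\A]\|_{H^{s+1,r}_{sc}(\overline\Omega)}
\]
from Corollary~\ref{elliptic estimate} and Lemma~\ref{stronger control}, and then reads off the theorem. Your only addition is the explicit integration-in-$x$ argument for $\A=0$; the paper leaves this implicit, and in fact Lemma~\ref{stronger control} already gives it directly once $\F=0$.
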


Now we are ready to prove the local injectivity of the lens rigidity problem.

\begin{proof}[Proof of Theorem \ref{local thm}]
Given Yang-Mills potentials $A$ and $\tilde A$, by the assumptions and the analysis from Sections \ref{section:local} and \ref{sec:injectivity of I_w}, there exist maps $u,\, \tilde u:U\to G$, where $U$ is an open neighborhood of $p$ in $M$, 
and $u|_{U\cap \p M}=\tilde u|_{U\cap \p M}=e$, such that if $A':=u^{-1}du+u^{-1}Au$, $\tilde A':=\tilde u^{-1}d\tilde u+\tilde u^{-1}\tilde A\tilde u$, then $A'$ and $\tilde A'$ are in the normal gauge under the local coordinates $(x,y)$ with respect to the hypersurface $H$ defined in Section \ref{section:local}. Moreover $A'$ and $\tilde A'$ have the same boundary jet near $p$, thus they can be extended identically near $p$ into $\tilde M$. Since $A$ and $\tilde A$ induce the same lens data near $S_p\p M$, so do $A'$ and $\tilde A'$. Applying the integral identity \eqref{integral identity 3}, by Theorem \ref{local invertibility}, we get that $A'$ and $\tilde A'$ are indeed identical near $p$ in $M$. This implies that if we denote $w=u\tilde u^{-1}$, then $\tilde A=w^{-1}dw+w^{-1}Aw$ in $M$ near $p$ with $w=e$ on $\p M$ close to $p$, which proves the local lens rigidity.
\end{proof}


\section{Global lens rigidity: proof of Theorem \ref{thm:main}}
\label{section:proofglobal}

In this final section we show how to use the local Theorem \ref{local thm} together with the strictly YM-convex function $f$ and the boundary $\partial M$ to prove the global result from the introduction.

As we already remarked in the introduction $f$ is also strictly convex as a function on the compact connected Riemannian manifold $(M,g)$, hence it has some properties that we now summarize; for proofs we refer for instance to \cite[Section 2]{PSUZ16}. The function $f$ has a unique local minimum point $z_{0}$ in $M$
and $f$ attains its global minimum there. Moreover, the set of critical points of $f$ is either $z_0$ or the empty set.
Hence, without loss of generality we assume that there exist $\tau<0$ and $z_0\in M$ such that $\inf_{z\in M} f(z)=\tau$, $f^{-1}(\tau)=\{z_0\}$, and $M= f^{-1}([\tau,0])$. We denote $U_\tau=\{f>\tau\}=M\setminus \{z_0\}$.

Observe that the level set function $x$ introduced in Section \ref{section:local} is locally defined, and depends on the convex point $p$. To prove the global lens rigidity, we apply a layer stripping argument similar to the one in \cite{PSUZ16, SUV17}. In particular, the level sets of $f$ will take the role of the boundary $\p M$ when we move the argument into the interior of $M$.

\begin{proof}[Proof of Theorem \ref{thm:main}] The proof has two parts. First we will construct a gauge on the set $U_{\tau}$ taking $A$ to $\tilde{A}$. For this we will need to glue the local gauges coming from Theorem \ref{local thm} and we will follow the strategy in \cite[Proposition 6.2]{PSUZ16} where a similar gluing was carried out for a related a linear problem.
The second part of the proof consists in showing that the gauge constructed on $U_{\tau}$ extends to $M$. For this we will exploit the fact that our gauges take values in the {\it compact} Lie group $G$.
Hence we start with:

{\it Claim:} There exists a smooth $u: U_\tau\to G$ with $u|_{\p M\cap U_\tau}=e$, such that $\tilde A=u^{-1}du+u^{-1}A u$ in $U_\tau$.

Given any $\tau<t\leq 0$, $f^{-1}(t)$ is a compact set (hypersurface), which is strictly YM-convex with respect to $A$ by assumption. Clearly $f^{-1}(0)\subset \p M$ since $f$ cannot attain a maximum at an interior point. Hence for any $p\in f^{-1}(0)$, by Theorem \ref{local thm} there exist a neighborhood $O_p$ of $p$ in $M$ and $u_p:O_p\to G$ with $u_p|_{\p M\cap O_p}=e$ such that $\tilde A=u_p^{-1}du_p+u_p^{-1} A u_p$ in $O_p$. On the other hand, for any $p,q \in f^{-1}(0)$, if $O_p\cap O_q\neq \emptyset$, we have that
$$u_p^{-1}du_p+u_p^{-1} A u_p=u_q^{-1}du_q+u_q^{-1} A u_q\,\,\mbox{in}\,\, O_p\cap O_q,\quad u_p|_{\p M\cap O_p\cap O_q}=u_q|_{\p M\cap O_p\cap O_q}=e.$$
Equivalently
\begin{equation}\label{ODE}
d(u_p u_q^{-1})+A(u_p u_q^{-1})-(u_p u_q^{-1})A=0, \quad u_p u_q^{-1}|_{\p M\cap O_p\cap O_q}=e.
\end{equation}
Notice that for any point $z\in O_p\cap O_q$, we can find a curve $\gamma$ connecting $z$ with $\p M\cap O_p\cap O_q$, then \eqref{ODE} reduces to an ODE along $\gamma$ with initial condition $e$. This implies that $u_q^{-1}u_p\equiv e$, i.e. $u_p=u_q$, in $O_p\cap O_q$. By the compactness of $f^{-1}(0)$, there exist $\epsilon>0$ and $u:\{f\geq -\epsilon\}\to G$, $u|_{\{f\geq -\epsilon\}\cap \p M}=e$ such that $\tilde A=u^{-1}du+u^{-1}A u$ in $\{f\geq -\epsilon\}$. 

Assume now that $\tilde A=u^{-1}du+u^{-1}Au$ in $\{f\geq s\}$ for some $s>\tau$, $u|_{\{f\geq s\}\cap \p M}=e$. We extend $u$ smoothly to $M$ such that $u|_{\p M}=e$ and still denote it by $u$. Since $A$ and $\tilde A$ have the same lens data, and the level set $f^{-1}(t)$ is strictly YM-convex with respect to both $A$ and $\tilde A$ for any $t\geq s$, it is not difficult to check that $\tilde A$ and $u^{-1}d u+u^{-1}A u$ have the same lens data near $f^{-1}(s)$ in $\{f\leq s\}$. We apply Theorem \ref{local thm} again to the compact set $f^{-1}(s)$ to obtain:  there exists $\epsilon>0$ and $v:\{s-\epsilon\leq f\leq s\}\to G$, $v|_{(\p M\cap \{s-\epsilon\leq f\leq s\})\cup f^{-1}(s)}=e$, such that 
\begin{equation}\label{extension beyond s}
\tilde A=v^{-1}dv+v^{-1}(u^{-1}du+u^{-1}Au)v
\end{equation}
in $\{s-\epsilon\leq f\leq s\}$. (To make the construction work, we indeed need the fact that $\p M$ is strictly YM-convex and that $f^{-1}(s)\cap \p M$ is compact, this is similar to the proof of \cite[Proposition 6.2]{PSUZ16}.) Using \eqref{extension beyond s} we can check that if we extend $v$ by $e$ to $\{f\geq s-\epsilon\}$, $v$ is smooth. If we define $w=uv$, so $w|_{\{f\geq s-\epsilon\}\cap \p M}=e$, then \eqref{extension beyond s} implies that $\tilde A=w^{-1}dw+w^{-1}Aw$ in $\{f\geq s-\epsilon\}$.


Notice that in above extension argument, the small constant $\epsilon=\epsilon(s)$ can be taken uniform for $s'$ close to $s$ (see also the discussion at the beginning of Section \ref{sec:injectivity of I_w}). We complete the proof of the {\it Claim} arguing by contradiction. Suppose
\[ s_0:=\inf \{t: A\;\text{and}\; \tilde A\; \text{are gauge equivalent in}\; f\geq t\}>\tau.\]
Then $\epsilon(s')=\epsilon(s_{0})$ for $s'$ close enough to $s_0$ by uniformity. The argument in the previous paragraph applied to $s'$ implies that $A$ and $\tilde A$ are gauge equivalent in $\{f\geq s'-\epsilon\}$, while $s'-\epsilon<s$, this is a contradiction.

So far we have shown that $A$ and $\tilde A$ are gauge equivalent in $U_{\tau}$ with some smooth $u:U_\tau\to G$, $u|_{U_\tau\cap \p M}=e$. The only thing left is to show that $u$ can be extended smoothly to $M$. 
By the gauge equivalence, we have that on $U_\tau$,
\begin{equation}\label{control du}
du=u\tilde A-Au.
\end{equation}
Since $A$ and $\tilde{A}$ are smooth in $M$ and $G$ is a compact Lie group, \eqref{control du} implies
that the norm of $du$ is uniformly bounded in $U_{\tau}$ and hence $u$ is uniformly continuous.
Since $M\setminus U_{\tau}=\{z_0\}$ is a single point,  this implies that $u$ can be extended 
continuously to $M$. We claim that $u$ is indeed smooth on $M$. 
Equation \eqref{control du} shows that $du$ can be continuously extended to $M$ too. Moreover, by differentiating both sides of \eqref{control du} repeatedly, we can extend any higher order derivative of $u$ continuously from $U_\tau$ to $M$. Now by essentially applying the fundamental theorem of calculus, one can show that the continuous extension $u$ is smooth on $M$, see e.g. \cite[Lemma 6.2]{Ce17}. In particular, \eqref{control du} holds on $M$ now, which shows the gauge equivalence of $A$ and $\tilde A$ on $M$.

This completes the proof of the global lens rigidity theorem.
\end{proof}


\end{document}